\newtheorem{Proposition}{Proposition}
\newtheorem{Lemma}{Lemma}
\newtheorem{Cor}{Corollary}
\newcommand{\rmk}{\bigskip \par \noindent {\it Remark: }}
\newcommand{\cA}{\mathcal{A}}                 
\newcommand{\cB}{\mathcal{B}}
\newcommand{\cC}{\mathcal{C}}
\newcommand{\cU}{\mathcal{U}}
\newcommand{\proj}[1]{\bbP_{\!#1}}
\newcommand{\projh}[1]{\proj{{\rm hor}}}
\newcommand{\R}{{\mathbb R}}
\newcommand{\lp}{\left (}
\newcommand{\rp}{\right )}
\newcommand{\la}{\left \langle}
\newcommand{\ra}{\right \rangle}
\newcommand{\lcb}{\left \{}
\newcommand{\rcb}{\right \}}
\newcommand{\sands}{\qquad \mbox{and} \qquad}
\newcommand{\half}{{\textstyle {1 \over 2}}}
\newcommand{\smallfrac}[2]{{\textstyle {#1 \over #2}}}
\newcommand{\rmkend}{ \medskip}
\newcommand{\bbP}{{\mathbb P}}
\newcommand{\ddt}{\smallfrac{d\ }{dt}}
\newcommand{\beq}[1]{\begin{equation}\label{#1}}
\newcommand{\eeq}{\end{equation}}
\newcommand{\beqa}{\begin{eqnarray*}}
\newcommand{\eeqa}{\end{eqnarray*}}
\newcommand{\beqan}{\begin{eqnarray}}
\newcommand{\eeqan}{\end{eqnarray}}
\newcommand{\Prop}[1]{\acapon\begin{Proposition}\label{#1}}
\newcommand{\eProp}{\end{Proposition}}
\newcommand{\Lem}[1]{\acapon\begin{Lemma}\label{#1}}
\newcommand{\eLem}{\end{Lemma}}
\newcommand{\prfend}{\ \vrule height6pt width6pt depth0pt \medskip}
\newcommand{\Id}{\hbox{{\rm 1}\kern-3.8pt \elevenrm1}}
\newcommand{\lamu}{{\lambda}}
\newcommand{\lamuf}{{\lambda_{\!f}}}
\newcommand{\sfin}{s_{\! f}} 
\newcommand{\tfin}{t_{\! f}}
\newcommand{\xfin}{x_{\! f}}
\newcommand{\tilC}{\widetilde C}
\newcommand{\tilchi}{\widetilde \chi}
\newcommand{\sgn}{\mbox{sgn}\,}
\newcommand{\pd}[2]{\smallfrac {\partial #1}{\partial #2}}
\newcommand{\dddt}{\smallfrac {d^2 \ }{d t^2}}
\begin{document}

\title{Optimal control with moderation incentives}
\author{Debra Lewis\thanks{Mathematics Department, University of California, Santa Cruz, 
Santa Cruz, CA 95064. {\tt lewis@ucsc.edu}. Supported by NSF DMS-0405610}}

\maketitle

\begin{abstract}
A purely state-dependent cost function can be modified by introducing a control-dependent
term rewarding submaximal control utilization. A moderation incentive is identically zero on
the boundary of the admissible control region and non-negative on the interior; it is bounded
above by the infimum of the state-dependent cost function, so that the instantaneous total 
cost is always non-negative. The conservation law determined by the Maximum Principle, in 
combination with the condition that the moderation incentive equal zero on the boundary of the 
admissible control region, plays a crucial role in the analysis; in some situations, the initial and final values of the auxiliary variable are uniquely determined by the condition that the conserved quantity
equal zero along a solution of the arbitrary duration synthesis problem. Use of an alternate system of
evolution equations, parametrized by the auxiliary variable, for one-degree of freedom controlled acceleration systems, can significantly simplify numerical searches for solutions of the arbitrary 
duration synthesis problem. A one-parameter family of `elliptical' moderation incentives 
is introduced; the behavior of the well-known quadratic control cost and its incentive analog is 
compared to that of the elliptical incentives in two simple controlled acceleration examples. 
The elliptical incentives yield smooth solutions with controls remaining in the interior 
of the admissible region, while the quadratic incentive allows piecewise smooth solutions with
controls moving on and off the boundary of the admissible region; in these examples,
the arbitrary duration synthesis problem for the traditional quadratic control cost has no 
solution---the total cost is a monotonically decreasing function of the duration.
\end{abstract}

\section{Introduction}
Optimal control problems typically involve constraints on both the state variables and 
control. For example, (bio)mechanical systems cannot generate or withstand 
arbitrarily large forces or accelerations. For some cost functions, 
trajectories approaching the boundary of the admissible region are so extravagant that
the boundary can safely be left out of the mathematical model. However, when a task is to be executed 
as quickly as possible, the bounds on the possible play a crucial role in determining the optimal 
process---cost considerations would drive the solution outside the admissible region if these bounds
were not explicit imposed. 

If the relevant constraints are explicitly incorporated in the state space and admissible control 
region, the cost function for a time minimization problem is constant. Given the degeneracy of the 
constant cost function, the optimal control values are sought on the boundary of the admissible
controls set. In some situations of interest, geometric optimization and integration 
methods can be used (see, e.g., \cite{LO}) to work directly on the boundary. If geometric methods
are not available or desirable, penalty functions can be used to construct algorithms on an ambient
vector space that respect the boundary due to the prohibitive (possibly infinite) expense of crossing
it (see, e.g., \cite{BG, BB}, and references therein). In many situations, particularly in biological 
models, a close approach to the boundary of the 
admissible region is undesirable---stresses on joints, muscles, and bones are severe near the 
breaking or tearing points of these structures---but sometimes justified.
An animal may be willing to push itself to its physical limits to escape a 
high-risk situation; many machines are engineered to execute certain tasks rapidly, even if this 
involves high energy consumption and rapid wear of mechanical parts. In such cases, selection of
an appropriate penalty function is essential, as too severe a penalty will yield overly conservative
solutions. 

An important class of  optimal control problems can be interpreted as modified time minimization
problems, in which certain states are more costly than others. When modeling a conscious agent, the unit cost function 
of a traditional  time minimization problem can be interpreted as representing a uniform stress and/or 
risk  throughout the task,  while a modified time minimization cost function models a combination of 
instantaneous stresses and risks that explicitly depend on the current state. This formulation may be 
more natural than an admissible/inadmissible dichotomy, particularly for biological systems. For example, consider the Kane-Scher model of
the classic `falling cat' problem, in which a cat is suspended upside down and then released;  a typical cat can right itself without net angular momentum from heights of approximately one meter.
(See, e.g.,  \cite{Marey, KS, Mont93, LO}.)  Kane and Scher \cite{KS} proposed a two rigid body model of a cat; to 
eliminate the mechanically efficient but biologically unacceptable solution in which the front and 
back halves counter-rotate, resulting in a $360^\circ$ twist in the `cat', Kane and Scher imposed
a no-twist condition in their model. However, actual cats can and do significantly 
twist their bodies; replacing the no-twist condition with a deformation-dependent term in the cost 
function that discourages excessively large relative motions allows more realistic 
motions.  We will refer to optimal control problems with cost functions depending 
only on the state variables as modified time minimization problems. 

Given a modified time minimization problem, we are interested in modifying the cost 
function to take the control effort into account. We assume here that a cost function modeling the  
do-or-die, `whatever it takes' approach is known, and construct a new cost function by 
subtracting a control-dependent term. Our approach is to regard this  term  not as a penalty or cost, 
but a deduction rewarding submaximal control efforts. Hence we specify that this term equal zero 
on the boundary of the admissible control set and be bounded above by the minimum of the original 
cost function, so that the total instantaneous cost function remains non-negative. 
We can construct parametrized families of such functions and adjust the urgency of the task by 
adjusting the parameter. The incentive function may allow controls to move on and off the boundary 
of the admissible control region, or may approach zero  sufficiently rapidly as the control approaches 
the boundary that controls starting in the interior of the admissible control region will remain there
throughout the maneuver. 

The notion of a moderation incentive can guide the modification of familiar cost functions. As we shall see in 
the examples, simply modifying the cost  function by a constant can make the difference between
the existence and absence of solutions of the arbitrary duration synthesis problem. The 
cost functions we use here to illustrate this property are quadratic in the control.
Quadratic control cost (QCC) minimization, with cost functions of the form $C(x, u) =  Q_x(u)$ for 
some smooth family of quadratic forms $Q_x$ determined by an inner product or Riemannian metric, 
has played an important role in geometric optimal control 
theory. (See, e.g.,  \cite{BW},  \cite{Bloch}, \cite{Mont92}, and references therein.) 
If the admissible control region is unbounded, QCC functions yield relatively 
simple evolution equations: if the state space is a subset of a Riemannian manifold $M$, the space of
admissible controls has full rank, $\dot x = u$,  and $C(x, u) = \half |u|_x^2$, then the traces in $M$ of 
the optimal trajectories are geodesics. If the control $u$ is constrained to lie in a distribution of less 
than full rank, the corresponding QQC problem leads to sub-Riemannian geometry.
(See, e.g. \cite{Montqq} or \cite{BW}.)  Thus many existing results from geometric mechanics and (sub-)Riemannian geometry can be utilized in the analysis of simple QCC control problems.  QCC
optimization sometimes follows a `the slower, the better' strategy: in some important  QCC problems 
the total cost is a decreasing function of the maneuver duration; hence there is no optimal solution 
of the arbitrary duration QCC problem.
If there is a range of durations $[T_{\rm min}, \infty)$ for which unique specified duration QQC solutions 
exist, then the QCC trajectory of duration $T_{\rm min}$ may be of interest as the fastest of the slow.  
However, it is unclear in what sense these trajectories are optimal. 
Modifying the quadratic control cost by a constant, so as to satisfy the condition that the moderation 
incentive equal zero on the boundary of the admissible control region, can yield arbitrary duration 
synthesis problems for which unique solutions {\it do} exist in situations where the QCC
function (nonzero on the boundary of the sphere) lacks such solutions. 


We introduce a one-parameter family of `elliptical' moderation incentives
$\tilC_\mu: [0, 1] \to [0, \mu]$, $\mu \in (0, 1]$, by
\beq{tilC_mu}
\tilC_\mu( s) := \mu \sqrt{1 - s^2}.  
\eeq
(The graph of $\mu \, \sqrt{1 - s^2}$, $0 \leq s \leq 1$, is a segment
of an ellipse of eccentricity $\mu$.) $\tilC_0$ is the trivial incentive associated to the unmoderated
modified time minimization problem. For $\mu \in (0, 1]$, the control values determined by
$\tilC_\mu$ always lie in the interior of the unit ball; if the unmoderated cost function is smooth,
the state variables and control will also be smooth. 
(In contrast, some of the solutions for the quadratic incentive and QCC we find in the examples
are only piecewise smooth, moving on and off the unit sphere.) However, the penalty imposed by 
$\tilC_\mu$ is not prohibitive; as we shall see in the examples, we can come arbitrarily close to the 
control region boundary by adjusting $\mu$. The elliptical incentives have some
simple properties that make them particularly convenient to work with in certain kinds of analytic
and numerical calculations. Finally, in the examples treated here, there
are some qualitative resemblances between the trajectories determined by the quadratic incentive
$\tilC_{\rm q}$ and the elliptical incentives for values of $\mu$ near 1. 

We make several simplifying assumptions in the present work. We assume that the admissible
control region is the Euclidean unit ball and our incentives are nonincreasing functions of 
the magnitude of the control. We restrict our attention to problems in which the state variables consist of
a position vector in $\R^n$ and its first $k - 1$ derivatives; the $k$-th derivative is fully controlled
and the unmoderated cost function depends only on the position. These assumptions are not
central to the formulation of the moderated problem, but they lead to particularly simple
expressions in some key constructions. More general control systems, with more complex 
admissible control regions (including ones determined in part by the state variables) and more
general controls, will be considered in future work.  	

We consider two examples that illustrate some of the key features of the moderated
control problems and suggest directions of future research. The first example is a very simple 
one-dimensional controlled acceleration problem: a particle at rest at one position is to be moved 
a unit distance by controlling the acceleration; the initial and terminal velocities are zero. This
classic starter problem is treated in \cite{PBGM, Kirk}, and other texts. The well-known time 
minimizing solution is the `bang-bang' solution, with acceleration equal to 1 for the first half of the maneuver and $-1$ for the second half; the solution of the arbitrary duration problem with
quadratic incentive has  linear acceleration;  the solutions for the elliptical incentives have 
smooth accelerations approaching the bang-bang solution as the moderation parameter approaches
zero, and approaching the QCC solution as the parameter approaches one.

The second example is a generalization of the first, with a position penalty added to the control
cost. The position penalty is monotonically decreasing and equals zero at the destination. 
This example can be interpreted as a very simple model of `spooking' (flight reaction), in which
the position penalty models aversion to a localized stimulus and the destination is the position
at which the animal first feels entirely safe or comfortable.  The reflectional symmetry seen in the
first example is broken: all of the cost functions studied here, with the exception of the trivial 
moderation incentive, yield asymmetric solutions, with relatively strong initial accelerations and
relatively weak decelerations. The quadratic incentive yields only piecewise smooth solutions,
while the elliptical incentive solutions are smooth for all nonzero values of the moderation 
parameter. For small-to-middling values of the moderation 
parameter $\mu$, the solutions for the elliptical incentives show little response 
to the intensity of the position penalty---the solutions remain close to the corresponding solutions for 
the corresponding problem without a position penalty even when the position penalty is high. 
Roughly speaking, if little or no incentive to take it easy is added to a time-pressured task, the optimal strategy is to get it all over with (almost) as quickly as possible; there's little room 
for modification of the strategy if additional discomfort or risk  is introduced.  On the other hand, if there's
a significant reward for moderate effort, the strategy in the absence of a position penalty will be to
take it slowly, and the introduction of some variable risk or discomfort can yield dramatic speed-ups
in overall execution times, as well as significant variations in control magnitudes.The classic quadratic control cost (QCC) function, which is nonzero on the boundary of the 
admissible control region, determines a total cost that is a monotonically decreasing function of the 
maneuver duration; as the specified duration is increased, the solutions perform an increasing 
number of oscillations about the destination before coming to rest.

\section{The $k$-th order moderated synthesis problem}

We apply Pontryagin's Maximum Principle to a special class of optimal control problems
that illustrate some of the features of moderated control problems, but are relatively easily analysed.
We focus  on the $k$-th order evolution equation $x^{(k)} = u$, $x: [0, \tfin] \to \R^n$, 
with specified initial and final values of the $x^{(j)}$, $j = 1, \ldots, k - 1$, and unmoderated 
cost function depending only on the position $x$, not the derivatives of $x$. We further simplify 
the analysis by assuming that the admissible control region is the unit ball in $\R^n$ and that 
the cost depends on the control only through its norm.

Consider a control problem with state variable $z \in \R^m$, control $u \in \cU \subset \R^k$,
evolution equation $\dot z = V(z, u)$, boundary conditions $z(0)  = z_0$ and $z(\tfin) = z_{\! f}$,
and cost function $C: \R^m \times \cU \to \R$. Assume that both $V$ and $C$ are continuous,
with continuous derivatives with respect to
the state variable $z$; let $\cA$ denote the set of triplets $(z, u, \tfin)$, such that $\tfin > 0$,
$(z, u): [0, \tfin] \to \R^m \times \cU$ satisfies the evolution equation and boundary equations, $z$ 
is continuous with piecewise continuous derivative $\dot z$, and $u$ is piecewise continuous.
Pontryagin's Maximum Principle states that if $(z, u, t) \in \cA$ minimizes the total cost over $\cA$, i.e.
\[
\int_0^{\tfin} C(z(t), u(t)) dt = \min_{(\tilde z, \tilde u, \tilde \tfin) \in \cA}
\int_0^{\tilde \tfin} C(\tilde z(t), \tilde u(t)) dt,
\]
then there is a continuous curve $\psi: [0, \tfin] \to \R^m$ and constant $\phi \geq 0$ such that
\beq{Pont_ham}
\dot z = \pd {H_\phi} \psi(z, \psi, u), \qquad 
\dot \psi = - \pd {H_\phi} z(z, \psi, u),
\sands
H_\phi(z, \psi, u) = \max_{\upsilon \in \cU} H_\phi(z, \psi, \upsilon)
\eeq
for $H_\phi(z, \psi,  u) :=  \la \psi, V(z, u) \ra - \phi \, C(z, u)$. The Hamiltonian $H_\phi$ is constant
along a curve satisfying (\ref{Pont_ham}); if the curve minimizes the curve on $\cA$, then constant
is zero. (See \cite{PBGM} for the precise statement and proof of the Maximum Principle.) 

Pontryagin's conditions are necessary, but not sufficient, for optimality; their appeal lies in their
constructive nature: known results and techniques for boundary value problems and Hamiltonian systems can be used in constructing the triplets $(z, u, \tfin)$ satisfying Pontryagin's conditions.
This construction is referred to as the synthesis problem in \cite{PBGM}. We will restrict our
attention to the synthesis problem, setting aside the rigorous analysis of actual optimality of the 
trajectories we obtain. It suffices to consider the 
cases $\phi = 1$ and $\phi = 0$, since we can rescale $\psi$ by $\phi \neq 0$. $H_0$ is clearly
independent of the cost function $C$; Hamilton's equations for $H_0$ equal those for a
constant cost function, used in determined the minimum-time admissible curves. Hence we
will focus on $H_1$, simply noting that when searching for the optimal solution, it is necessary
to consider the possibility that the total cost is minimized by the minimum duration trajectory. 

Here we consider control problems of the form $x^{(k)} = u$,  where 
$x: [0, \tfin] \to \R^n$ and
$u: [0, \tfin] \to \cB^n = \lcb u \in \R^n : |u| \leq 1 \rcb$, for some (possibly specified) 
$\tfin > 0$, with specified boundary values $x^{(j)}(0)$ and $x^{(j)}(\tfin)$, $j = 0, \ldots, k - 1$.
We restrict our attention to cost functions that are the difference of a term depending only on 
the position $x$ and a term depending only on the magnitude of the control. We assume that
the position-dependent term is bounded below (for simplicity, we take the bound to be 1), and
the control-dependent term has range contained within [0,1]; thus the instantaneous cost is
non-negative. Given $C \in \cC^1(\R^n, [1, \infty))$ and $\tilC \in \cC^0([0, 1], [0, 1])$, 
we seek $x$ with piecewise continuous $k$-th derivative and piecewise continuous 
$u$ minimizing the total cost
\[
\int_0^{\tfin} \lp C(x(t)) - \tilC(|u(t)|) \rp dt
\]
over all such state/control variable pairs.

To apply the  Pontryagin Maximum Principle to our control problem, we first
convert the $k$-th order evolution equation $x^{(k)} = u$ into a first order system of ODEs by
introducing the auxilliary state variables $d_j := x^{(j)}$, 
$j = 1, \ldots, k - 1$, and setting $z = (x, d_1, \ldots, d_{k - 1}) \in (\R^n)^k \approx \R^{n \, k}$. The
resulting first order evolution equation is
\beq{first_ord_sys}
\dot z = V(z, u) := (d_1, \ldots, d_{k - 1}, u).
\eeq
If we let $\psi = (\kappa_1, \ldots, \kappa_{k - 1}, \lamu) \in (\R^n)^k \approx \R^{n \, k}$, then 
$H_1$ is equivalent to the Hamiltonian $H: (\R^n)^{2 \, k + 1} \to \R$ given by
\beq{big_H}
H(x, d_1, \ldots, d_{k - 1}, \kappa_1, \ldots, \kappa_{k - 1}, \lamu, u) 
:= \sum_{j = 1}^{k - 1} \la \kappa_j, d_j \ra + \la \lamu, u \ra + \tilC(|u|) - C(x).
\eeq

The control $u$ is chosen at each time $t$ so as to maximize the Hamiltonian. 
Since (\ref{big_H}) depends on $u$ and $\lamu$ only through the term
$\la \lamu, u \ra + \tilC(|u|)$, the optimal value of $u$ satisfies $|\lamu| \, u = |u| \, \lamu$, and
\beq{bigH}
H(x, d_1, \ldots, d_{k - 1}, \kappa_1, \ldots, \kappa_{k - 1}, \lamu, u) 
= \max_{\upsilon \in \cB^n} H(x, d_1, \ldots, d_{k - 1}, \kappa_1, \ldots, \kappa_{k - 1}, \lamu, \upsilon)
\eeq
if and only if
\beq{max_cond}
|u| \, |\lamu| + \tilC(|u|) = \chi(\lamu) := \max_{0 \leq \sigma \leq 1} \sigma \, |\lamu| + \tilC(\sigma).
\eeq
If $\tilC(0) > \tilC(s)$ for all $s \in (0, 1]$, then $\lamu = 0$ implies $u = 0$; if
$\tilC$ achieves its maximum at any point other than the origin, $u$ is
not uniquely determined when $\lamu = 0$. (In most of the cases considered 
here, $u$ is uniquely determined at $\lamu = 0$ but this is not true for the 
time minimization problem.) 

Hamilton's equations 
\[
\begin{array}{ll}
{\displaystyle \dot x = \pd H {\kappa_1} \ = \ d_1} 
& \qquad {\displaystyle \dot \kappa_1 = - \pd H x \ = \ \nabla C(x)} \smallskip \\
{\displaystyle \dot d_j = \pd H {\kappa_{j + 1}} \ = \ d_{j + 1} }
& \qquad {\displaystyle \dot \kappa_j = - \pd H {d_{j - 1}} \ = \ - \kappa_{j - 1},
	\qquad j = 2, \ldots, k - 1} \smallskip \\
{\displaystyle \dot d_{k - 1} = \pd H \lamu \ = \ u} 
& \qquad {\displaystyle \dot \lamu = - \pd H {d_{k - 1}} \ = \ - \kappa_{k - 1}}
\end{array}
\]
for the Hamiltonian (\ref{bigH}) are equivalent to $x^{(j)} = d_j$ and $\lamu^{(j)} = (-1)^j \kappa_{k - j}$, 
$j = 1, \ldots, k - 1$, $x^{(k)} = u$, and $\lamu^{(k)}  = (-1)^{k - 1} \nabla C(x)$.
Inserting these expressions into the Hamiltonian (\ref{big_H}) yields
\beq{cons_law}
\chi(\lamu)  - C(x) + \sum_{j = 1}^{k - 1} (-1)^j \la x^{(k - j)}, \lamu^{(j)} \ra
\eeq
Pontryagin's Maximum Principle implies that (\ref{cons_law}) is constant along 
a curve $(x, \lamu, u)$ satisfying Hamilton's equations and maximizing the
Hamiltonian.

\begin{definition}
$x \in \cC^{k - 1}([0, \tfin], \R^m)$, with piecewise continuous $k$-th 
derivative, satisfying the given boundary conditions on $x^{(j)}(0)$ and
$x^{(j)}(\tfin)$, $j = 0, \ldots, k - 1$, is a solution of the 
{\it $k$-th order synthesis problem} if there exists $\lamu:[0, \tfin] \to \R^m$ 
satisfying
\beq{pre_opt}
\lamu^{(k)}  = (-1)^{k - 1} \nabla C(x)
\sands
|x^{(k)}| \, |\lamu| + \tilC(|x^{(k)}|) = \chi(\lamu)
\eeq
for $0 \leq t \leq \tfin$. If $x^{(k)}$ is discontinuous at $t_*$, then $x^{(k)}(t_*)$ agrees with 
the left hand limit, i.e. $x^{(k)}(t_*) = \lim_{t \to t_*^-}x^{(k)}(t)$. 

If, in addition, (\ref{cons_law}) equals zero along the curve $(x, \lamu)$, $x$ is a solution 
of the {\it $k$-th order synthesis problem of arbitrary duration}.
\end{definition}

We follow the convention of \cite{PBGM} in specifying that $x^{(k)}$ is continuous to the left
at a discontinuity; one could as well choose the right hand limit.

We introduce a class of functions $\tilC$ for which the optimal value of the 
control $u$ is explicitly given as the gradient of a function of $\lamu$ when $\lamu \neq 0$,
determining a system of $k$-th order system of ODEs for $x$ and $\lamu$. The functions
equal zero on the sphere $S^{n - 1}$ bounding the admissible control region; the instantaneous
control cost equals the position-dependent term at peak control values.

\begin{definition}
If $\tilC \in \cC^0([0, 1], [0, 1])$ is differentiable on $(0, 1)$, $\tilC(1) = 0$, and there is a unique 
non-decreasing function $\sigma \in \cC^0(\R^+, (0, 1])$, differentiable on 
$\sigma^{-1}(0, 1)$, satisfying
\beq{sig_cond}
\sigma(s) \, s + \tilC(\sigma(s)) = \tilchi(s) := \max_{0 \leq \sigma \leq 1} \sigma \, s + \tilC(\sigma),
\eeq
we say that $\tilC$ is a {\it moderation incentive}, with {\it moderation potential}
$\chi: \R^n \to [0, \infty)$ given by $\chi(\lamu) := \tilchi(|\lamu|)$.
\end{definition}

\begin{Lemma}
\label{sig_lemma}
If $\tilC$ is a moderation incentive,  then $\tilchi$ is continuously differentiable
and strictly increasing on $\R^+$, with $\tilchi' = \sigma$. 
\end{Lemma}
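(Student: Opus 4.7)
My plan is to run the standard envelope/Danskin argument: sandwich the difference quotient of $\tilchi$ between $\sigma$ evaluated at the two endpoints, then squeeze via continuity of $\sigma$.

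First I would record the two sub-optimality inequalities that are built into the definition of $\sigma$. For any $s, s' \in \R^+$, $\sigma(s)$ is admissible in the max defining $\tilchi(s')$, so
\[
\tilchi(s') \ \geq \ \sigma(s)\, s' + \tilC(\sigma(s)) \ = \ \tilchi(s) + \sigma(s)(s' - s),
\]
using \for{sig_cond} in the last step. Swapping the roles of $s$ and $s'$ gives the companion inequality $\tilchi(s) \geq \tilchi(s') + \sigma(s')(s - s')$. Combining these for $s' > s$ yields
\[
\sigma(s) \ \leq \ \frac{\tilchi(s') - \tilchi(s)}{s' - s} \ \leq \ \sigma(s'),
\]
and the analogous chain (with the roles reversed) when $s' < s$. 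So the forward and backward difference quotients of $\tilchi$ at $s$ are trapped between $\sigma$ values at nearby points.

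Next I would invoke the assumed continuity of $\sigma \in \cC^0(\R^+, (0, 1])$ and let $s' \to s$ in the sandwich above; both bounds converge to $\sigma(s)$, so $\tilchi'(s)$ exists and equals $\sigma(s)$. Continuity of $\tilchi'$ is then immediate, again from continuity of $\sigma$, giving $\tilchi \in \cC^1(\R^+)$. Strict monotonicity follows at once from $\tilchi' = \sigma > 0$ on $\R^+$ (the definition places $\sigma$ in $(0,1]$); alternatively, one can read it directly off the first sub-optimality inequality, since $\sigma(s)(s' - s) > 0$ whenever $s' > s > 0$.

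I do not anticipate any real obstacle: everything needed is essentially hypothesized. The only place that deserves a second look is whether the sub-optimality inequalities are valid at $s = 0$ (where the claim concerns $\R^+$, so the boundary is not really at issue) and whether the one-sided limits genuinely exist; both points are handled uniformly by the sandwich, since the hypothesis $\sigma \in \cC^0(\R^+, (0,1])$ is continuity from both sides. No use is made of the differentiability of $\sigma$ on $\sigma^{-1}(0,1)$, the value $\tilC(1) = 0$, or the upper bound $\sigma \leq 1$ — those ingredients are reserved for later results.
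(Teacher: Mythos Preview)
Your proof is correct and takes a genuinely different route from the paper's. The paper argues by cases: when $\sigma(s) < 1$ it uses the interior first-order condition $s + \tilC'(\sigma(s)) = 0$ together with the chain rule applied to $\tilchi(s) = \sigma(s)\,s + \tilC(\sigma(s))$, so that the $\sigma'(s)$ contributions cancel and $\tilchi'(s) = \sigma(s)$; when $\sigma \equiv 1$ on $[s_*,\infty)$ it computes $\tilchi(s) = s$ directly; and it patches the two regimes at $s_*$ via the Mean Value Theorem. Your envelope/sandwich argument avoids the case split entirely and, as you note, uses only the defining maximality of $\sigma$ and its continuity---in particular it does not touch the differentiability of $\tilC$ on $(0,1)$ or of $\sigma$ on $\sigma^{-1}(0,1)$, whereas the paper's chain-rule step needs both. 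The paper's approach makes more visible \emph{why} the boundary regime $\sigma = 1$ is harmless (namely, $\tilchi$ is linear there), but yours is shorter, uniform, and strictly more general in its hypotheses.
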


\begin{proof}
If $\sigma(s) < 1$, then $\sigma(s)$ is a critical point of
$\sigma \mapsto \sigma \, s - \tilC(\sigma)$, and hence 
$s = \tilC'(\sigma(s))$. In addition, $\tilchi$ is differentiable at $s$,
with derivative
\[
\tilchi'(s) = \sigma(s) + (s - \tilC'(\sigma(s)) \sigma'(s) = \sigma(s).
\]

If $\sigma^{-1}(1) \neq \emptyset$, then $\sigma^{-1}(1) = [s_*, \infty)$ for some
$s_*$, since $\sigma$ is non-decreasing. Since $\tilchi(s) = s - \tilC(1)$
for $s \geq s_*$, $\tilchi$ is clearly differentiable and satisfies
$\tilchi'(s) = 1 = \sigma(s)$ for $s > s_*$. Continuity of $\sigma$ and the
Mean Value Theorem imply that $\tilchi$ is differentiable at $s_*$, with
$\tilchi'(s_*) = 1 = \sigma(s_*)$.
\end{proof}

We focus our attention on a one-parameter family of moderation incentives, which includes the
trivial incentive $\tilC_0 \equiv 0$, and a quadratic polynomial moderation incentive that differs
from a kinetic energy term by a constant in the case of controlled velocity. 

\begin{Proposition}
The functions $\tilC_\mu(s) = \mu \sqrt{1 - s^2}$, $0 \leq \mu \leq 1$, are  moderation incentives, 
with moderation potentials
\beq{MTM_cp}
\chi_\mu(\lamu) = \sqrt{\mu^2 + |\lamu|^2}.
\eeq
$\chi_\mu \in \cC^1(\R^m, \R^+)$ if $\mu > 0$; $\chi_0$ is continuously differentiable 
everywhere except at the origin, where the equation 
determining the optimal control value is completely degenerate.

The quadratic polynomial $\tilC_{\rm q}(s) = \half \lp 1 - s^2 \rp$ is a moderation incentive 
function, with moderation potential 
\beq{QCC_cp}
\chi_{\rm q}(\lamu) = \lcb \begin{array}{ll}
\half \lp 1 + |\lamu|^2 \rp \qquad & |\lamu| \leq 1\\
|\lamu| & |\lamu| > 1
\end{array} \right .
\eeq
$\chi_{\rm q} \in \cC^1(\R^m, \R^+)$.
\end{Proposition}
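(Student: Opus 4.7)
My plan is to verify the defining conditions for a moderation incentive by explicitly solving the one-dimensional maximization problem (\ref{sig_cond}) in each case, then reading off $\tilchi$ and hence $\chi$ from the maximizer.

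For the elliptical family with $\mu > 0$, I would fix $s \geq 0$ and differentiate $\sigma \mapsto \sigma s + \mu\sqrt{1-\sigma^2}$ on $(0,1)$. The resulting critical-point equation $s = \mu \sigma/\sqrt{1-\sigma^2}$ has the unique solution $\sigma_\mu(s) = s/\sqrt{s^2+\mu^2}$. Strict concavity of the integrand (second derivative $-\mu(1-\sigma^2)^{-3/2} < 0$) makes this the unique global maximizer on $[0,1]$; it is smooth, non-decreasing, and never attains $1$, so the regularity and monotonicity required by the definition follow immediately and $\sigma_\mu^{-1}(0,1)$ is all of $\R^+$. Substituting back, using $1-\sigma_\mu(s)^2 = \mu^2/(s^2+\mu^2)$, collapses $\tilchi_\mu(s) = \sigma_\mu(s)\, s + \mu\sqrt{1-\sigma_\mu(s)^2}$ to the single term $\sqrt{s^2+\mu^2}$, whence $\chi_\mu(\lamu) = \sqrt{\mu^2+|\lamu|^2}$. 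This is $\cC^\infty$ on $\R^m$ when $\mu > 0$, in particular $\cC^1$.

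The case $\mu = 0$ is trivial for $s > 0$: the linear map $\sigma \mapsto \sigma s$ is maximized uniquely at $\sigma_0(s) \equiv 1$, giving $\tilchi_0(s) = s$ and $\chi_0(\lamu) = |\lamu|$, which is $\cC^1$ away from the origin. At $s = 0$ every $\sigma \in [0,1]$ is a maximizer, which is precisely the degeneracy at the origin stated in the proposition.

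For the quadratic incentive $\tilC_{\rm q}$, I would similarly differentiate $\sigma \mapsto \sigma s + \half(1-\sigma^2)$ to obtain the unconstrained critical point $\sigma = s$; strict concavity again yields uniqueness. For $s \leq 1$ this lies in $[0,1]$, so the constrained maximizer is $\sigma_{\rm q}(s) = s$; for $s > 1$ the maximum on $[0,1]$ is at the boundary $\sigma_{\rm q}(s) = 1$. Thus $\sigma_{\rm q}(s) = \min(s,1)$ is non-decreasing and continuous on $\R^+$, and differentiable on $\sigma_{\rm q}^{-1}(0,1) = (0,1)$, as required. Direct substitution produces the two-branch formula for $\chi_{\rm q}$. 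The only step demanding care is $\cC^1$-smoothness at the junction $|\lamu| = 1$: both branches take the common value $1$ there, and their gradients $\lamu$ and $\lamu/|\lamu|$ coincide on the unit sphere, so the piecewise formula matches to first order. I expect this junction matching, together with the handling of the degenerate point $\mu=0$, $\lamu=0$, to be the only non-mechanical part of the argument.
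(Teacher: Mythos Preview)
Your proposal is correct and follows essentially the same route as the paper: explicitly solve the scalar maximization (\ref{sig_cond}) to identify $\sigma$ and $\tilchi$, then read off $\chi$. The only cosmetic differences are that the paper verifies optimality of the interior critical point for $\tilC_\mu$ via the direct inequality $\sqrt{\mu^2+s^2} > \max\{\mu,s\}$ rather than your concavity argument, and handles $\tilC_{\rm q}$ by completing the square $s\sigma + \half(1-\sigma^2) = 1 - \half(s-\sigma)^2$ rather than differentiating; your treatment is in fact slightly more thorough in explicitly checking the monotonicity and regularity clauses of the definition and the $\cC^1$ matching of $\chi_{\rm q}$ across $|\lamu|=1$.
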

\begin{proof}
If  $\mu > 0$, differentiating
\[
\sigma \,s + \tilC_\mu(\sigma) 
= \sigma \, s + \mu \sqrt{1 - \sigma^2} 
\]
with respect to $\sigma$ yields the criticality condition
$s = {\mu \, \sigma}/{\sqrt{1 - \sigma^2}}$,
with unique solution
\beq{opt_sig_mu}
\sigma = \frac s {\sqrt{\mu^2 + s^2}}.
\eeq
The inequality 
\[
\frac {s^2} {\sqrt{\mu^2 +s^2}} + \tilC_\mu \lp \frac {s} {\sqrt{\mu^2 +s^2}} \rp 
= \sqrt{\mu^2 +s^2}
> \mbox{max} \lcb \mu, s \rcb
= \mbox{max} \lcb \tilC_\mu(0), s + \tilC_\mu(1) \rcb
\]
for $\mu > 0$ and $s > 0$ implies that (\ref{opt_sig_mu}) is the optimal value of $\sigma$,
and hence $\chi_\mu$ is given by (\ref{MTM_cp}). $\sigma \, s + \tilC_0(s) = \sigma \, s$ achieves
its maximum $s = \sqrt{0^2 + s^2}$ at the boundary point $\sigma = 1$.

We now consider the quadratic polynomial $\tilC_{\rm q}$:
\[
s \, \sigma + \tilC_{\rm q}(\sigma) 
= s \, \sigma + \frac {1 - \sigma^2} 2
= 1 - \frac {(s - \sigma)^2} 2 
\] 
achieves its maximum on $[0, 1]$ at $\sigma = \mbox{min} \lcb s, 1 \rcb$.
\end{proof}

The evolution equations for $x$ and $\lamu$ in the synthesis problem associated to a
moderation incentive are a pair of $k$-th order skewed gradient equations.

\begin{Proposition}
\label{general}
Let $C \in \cC^1(\R^n, [1, \infty))$ and $\tilC$ be a moderation incentive , with 
moderation potential $\chi$.
$x: [0, \tfin] \to \R^n$ satisfying the given boundary conditions is a solution of the synthesis 
problem for the cost function $C(x) - \tilC(|u|)$ if and only if there exists 
$\lamu: [0, \tfin] \to \R^n$ satisfying
\beq{opt_gen_pair}
x^{(k)} = \nabla \chi(\lamu)
\sands
\lamu^{(k)} = (-1)^{k - 1} \nabla C(x).
\eeq
If, in addition, the conserved quantity (\ref{cons_law})
equals zero, $(x, \lamu)$ is a solution of the arbitrary duration synthesis problem.

If $\lamu(t_*) = 0$ at some time $t_*$ and $\nabla \chi(0)$ is undefined,  the first
equation in (\ref{opt_gen_pair}) is replaced with $x^{(k)} = \lim_{t \to t_*^-} \nabla \chi(\lamu(t))$.
\end{Proposition}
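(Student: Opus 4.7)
The plan is to reduce everything to Lemma \ref{sig_lemma}, which identifies $\tilchi' = \sigma$, together with the maximum condition (\ref{max_cond}). Since the costate equation $\lamu^{(k)} = (-1)^{k-1} \nabla C(x)$ already appears in the definition of the $k$-th order synthesis problem, the entire content of the first assertion is the equivalence of the equation $x^{(k)} = \nabla \chi(\lamu)$ with the maximization condition $|x^{(k)}| \, |\lamu| + \tilC(|x^{(k)}|) = \chi(\lamu)$ (plus the parallelism $|\lamu| \, x^{(k)} = |x^{(k)}| \, \lamu$ that was derived en route to (\ref{bigH})).

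First I would compute $\nabla \chi(\lamu)$ explicitly for $\lamu \neq 0$. Writing $\chi(\lamu) = \tilchi(|\lamu|)$ and applying the chain rule gives
\[
\nabla \chi(\lamu) = \tilchi'(|\lamu|) \, \frac{\lamu}{|\lamu|} = \sigma(|\lamu|) \, \frac{\lamu}{|\lamu|},
\]
where the last equality is Lemma \ref{sig_lemma}. In particular $|\nabla \chi(\lamu)| = \sigma(|\lamu|)$ and $\nabla \chi(\lamu)$ is parallel to $\lamu$ with the same orientation. Now, if $x^{(k)} = \nabla \chi(\lamu)$, then $|x^{(k)}| = \sigma(|\lamu|)$, so by the defining property (\ref{sig_cond}) of $\sigma$ we have $|x^{(k)}| \, |\lamu| + \tilC(|x^{(k)}|) = \tilchi(|\lamu|) = \chi(\lamu)$, and the parallelism condition is automatic. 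Conversely, the maximizing condition (\ref{max_cond}) forces $|x^{(k)}| = \sigma(|\lamu|)$ by uniqueness of $\sigma$ in the definition of a moderation incentive, and the parallelism $|\lamu| \, x^{(k)} = |x^{(k)}| \, \lamu$ then gives $x^{(k)} = \sigma(|\lamu|) \, \lamu/|\lamu| = \nabla \chi(\lamu)$.

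For the degenerate case $\lamu(t_*) = 0$ where $\nabla \chi$ may not be defined, I would invoke the convention immediately preceding the proposition that $x^{(k)}(t_*)$ agrees with its left-hand limit at discontinuities, which lets us replace $x^{(k)}(t_*)$ with $\lim_{t \to t_*^-} \nabla \chi(\lamu(t))$ in a consistent way. The arbitrary duration statement is then immediate: the proposition's conserved quantity is exactly (\ref{cons_law}), and Pontryagin's conditions state that this constant is zero precisely when the duration is free, matching the definition.

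The only real subtlety, and the place I would be most careful, is the $\lamu = 0$ degeneracy: one must check that $\sigma(|\lamu|) \, \lamu / |\lamu|$ either extends continuously through $\lamu = 0$ (which happens when $\sigma(0) = 0$, i.e.\ when $\tilC$ is strictly maximized at the origin) or else that the left-limit convention gives a well-defined value consistent with the maximization. Everything else is bookkeeping against the Hamilton equations already derived in the excerpt.
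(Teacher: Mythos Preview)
Your proposal is correct and follows essentially the same route as the paper: compute $\nabla \chi(\lamu) = \tilchi'(|\lamu|)\,\lamu/|\lamu| = \sigma(|\lamu|)\,\lamu/|\lamu|$ via Lemma~\ref{sig_lemma}, then use uniqueness of the maximizer $\sigma$ to pass between $x^{(k)} = \nabla \chi(\lamu)$ and the maximization condition (\ref{max_cond}) with the parallelism $|\lamu|\,x^{(k)} = |x^{(k)}|\,\lamu$; the degenerate point $\lamu(t_*)=0$ is handled by the left-limit convention. Your write-up is, if anything, more explicit than the paper's on both directions of the equivalence.
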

\begin{proof}
Assume that $x$ is a solution of the synthesis problem, with auxilliary function $\lamu$. 
If $\lamu \neq 0$, uniqueness of the maximizer $\sigma$  and Lemma \ref{sig_lemma} imply that 
\[
x^{(k)} = \frac {\sigma(|\lamu|)}{|\lamu|} \, \lamu  = \frac {\tilchi(|\lamu|)}{|\lamu|} \, \lamu
= \nabla \chi(\lamu).
\]
On the other hand, if $\lamu$ exists such that $(x, \lamu)$ satisfies (\ref{opt_gen_pair}), then
$x^{(k)} = \nabla \chi(\lamu)$, then the same argument shows that (\ref{pre_opt}) is satisfied.

Since $\tilchi$ is $\cC^1$ on $\R^+$, and hence $\chi$ is $\cC^1$ on $\R^n\backslash\lcb 0 \rcb$,
the left handed limit $\lim_{t \to t_*^-} \nabla \chi(\lamu(t))$ is well-defined when $\lamu(t_*) = 0$,
and equals $\lim_{t \to t_*^-} x^{(k)}(t)$. 
\end{proof}

If $\nabla \chi$ is invertible, then the first equation in (\ref{opt_gen_pair}) can be solved for $\lamu$.
For example, the elliptic moderation incentives have a 
potential with invertible gradient: 
\[
\nabla \chi_\mu(\lamu) = \frac {\lamu} {\chi_\mu(\lamu)},
\qquad \mbox{with} \qquad
(\nabla \chi_\mu)^{-1}(u) = \frac {\mu \, u} {1 - |u|^2}.
\]
In this case, (\ref{opt_gen_pair}) is equivalent to a $2k$-order ODE in $x$.

\begin{Cor}
\label{double_cor}
Let $C \in \cC^1(\R^n, [1, \infty))$ and $\tilC$ be a moderation incentive with moderation potential
$\chi$. If $\nabla \chi$ is defined everywhere and invertible, then $x: [0, \tfin] \to \R^n$ satisfying 
the given boundary conditions is a solution of the synthesis  problem for the cost function 
$C(x) - \tilC(|u|)$ if and only if 
\beq{double_order}
\frac {d^k \ }{dt^k} (\nabla \chi)^{-1} \lp x^{(k)} \rp = (-1)^{k - 1} \nabla C(x)
\eeq
for $0 \leq t \leq \tfin$. 
If, in addition, 
\beq{double_cons}
C(x) = \chi \lp  (\nabla \chi)^{-1} \lp x^{(k)} \rp \rp 
+ \sum_{j = 1}^{k - 1} (-1)^j \la x^{(k - j)}, \frac {d^j \ }{dt^j} (\nabla \chi)^{-1} \lp x^{(k)} \rp \ra,
\eeq
$x$ is a solution of the arbitrary duration synthesis problem.
\end{Cor}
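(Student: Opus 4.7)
The plan is to eliminate the auxiliary variable $\lamu$ from the first-order paired system provided by Proposition \ref{general}. Since $\nabla \chi$ is assumed to be defined everywhere and invertible, the first equation of (\ref{opt_gen_pair}), namely $x^{(k)} = \nabla \chi(\lamu)$, is pointwise equivalent to $\lamu = (\nabla \chi)^{-1}(x^{(k)})$. Substituting this expression into the second equation $\lamu^{(k)} = (-1)^{k - 1} \nabla C(x)$ produces exactly (\ref{double_order}). Conversely, given $x$ satisfying (\ref{double_order}) and the boundary conditions, I would define $\lamu := (\nabla \chi)^{-1}(x^{(k)})$ and observe that the two equations of (\ref{opt_gen_pair}) are recovered by construction; Proposition \ref{general} then concludes that $x$ is a solution of the synthesis problem. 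So the first assertion reduces to two routine substitutions, one in each direction of the equivalence.

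For the arbitrary-duration part, I would plug $\lamu = (\nabla \chi)^{-1}(x^{(k)})$ directly into the conserved quantity (\ref{cons_law}). The term $\chi(\lamu)$ becomes $\chi((\nabla \chi)^{-1}(x^{(k)}))$, each derivative $\lamu^{(j)}$ becomes $\frac{d^j \ }{dt^j}(\nabla \chi)^{-1}(x^{(k)})$, and setting the resulting expression equal to zero and solving for $C(x)$ yields exactly (\ref{double_cons}). The converse direction is the same calculation read backwards, followed by invocation of the last sentence of Proposition \ref{general} to identify $x$ as a solution of the arbitrary duration synthesis problem.

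I do not expect a substantive analytic obstacle; the proof is essentially bookkeeping built on top of Proposition \ref{general}. The one place that warrants a sentence of care is regularity. The synthesis problem only demands $x \in \cC^{k - 1}$ with piecewise continuous $x^{(k)}$, so $(\nabla \chi)^{-1}(x^{(k)})$ is only piecewise continuous a priori, while (\ref{double_order}) implicitly requires it to be $\cC^{k - 1}$ with piecewise continuous $k$-th derivative in order for its left-hand side to be meaningful. I would point out that this is the natural regularity hypothesis under which (\ref{double_order}) is being read, and that with $\nabla \chi$ a $\cC^1$ diffeomorphism the inverse function theorem guarantees $(\nabla \chi)^{-1}$ is $\cC^1$, so no regularity is gained or lost when passing between $\lamu$ and $x^{(k)}$.
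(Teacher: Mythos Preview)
Your proposal is correct and matches the paper's approach exactly: the paper states this result as a corollary with no explicit proof, preceded only by the remark that when $\nabla \chi$ is invertible the first equation in (\ref{opt_gen_pair}) can be solved for $\lamu$, making (\ref{opt_gen_pair}) equivalent to a $2k$-th order ODE in $x$. Your substitution argument is precisely the intended justification, and your additional paragraph on regularity is a welcome clarification that the paper leaves implicit.
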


\section{One dimensional controlled acceleration systems}

In the arbitrary duration problem the final time $\tfin$ is generally not known
a priori; if a closed form expression for the solutions of the
synthesis problem cannot be found,  an iterative numerical procedure 
may be needed be find the appropriate $\tfin$. In some situations it may be both
possible and desirable to reformulate the problem to avoid this difficulty.  As an
example, we present an approach suitable for a class of one dimensional controlled 
acceleration problems. 
 
If $\dot \lamu$ is known a priori to be nonzero, we can reparametrize the 
evolution equations and use the conservation of (\ref{cons_law}) to replace the pair
of autonomous second order of equations (\ref{opt_gen_pair}) with a pair of first order 
nonautonomous ODEs, with independent variable $\lamu$, 
and a subordinate first order equation relating $\lamu$ and $t$. The induced boundary 
conditions for this problem may be more convenient  than
those of the original synthesis problem.
As we shall show below, the pair of first order ODEs for $x$ and the
auxiliary variable $q$ can be formulated without a priori knowledge of
the behavior of $\lamu$; if a solution $(x, q)$ is found that satisfies the 
relevant equalities and inequalities,  it determines a solution of the synthesis
problem. 

\begin{Proposition}
\label{reparam_prop}
If 
\begin{enumerate}
\item
there are functions $r: [\lamu_0, \lamuf] \to \R$ and $q: [\lamu_0, \lamuf] \to \R^+$
satisfying the evolution equations
\beq{reparam}
q \, r'  + C \circ r - \chi = \mbox{\rm constant},
\sands
q' = - 2 \, C'(r),
\eeq
and the boundary conditions $r(\lamu_0) = x_0$, $r(\lamuf) = \xfin$, 
$\sqrt{q(\lamu_0)} \, r'(\lamu_0) = v_0 \, {\rm sgn} (\lamuf - \lamu_0)$, and 
$\sqrt{q(\lamuf)} \, r'(\lamuf) = v_{\!f} \, {\rm sgn} (\lamuf - \lamu_0)$
\item
the solution of the IVP
\beq{reparam_lam}
\dot \lamu = \sgn (\lamuf - \lamu_0) \sqrt{q(\lamu)}
\sands
\lamu(0) = \lamu_0
\eeq
passes through $\lamuf$ at some positive time $\tfin$,
\end{enumerate}
then $x = r \circ \lamu: [0, \tfin] \to \R^n$ is a solution of the one dimensional
controlled acceleration synthesis problem with boundary data
$x(0) = x_0$, $x(\tfin) = \xfin$, $\dot x(0) = v_0$, and $\dot x(\tfin) = v_{\!f}$.
If the constant in (\ref{reparam}) is zero, then $x$ is a solution of 
the arbitrary duration synthesis problem.
\end{Proposition}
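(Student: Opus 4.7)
The plan is to promote $x := r \circ \lamu$ to a solution of Hamilton's equations (\ref{opt_gen_pair}) in the case $k = 2$, $n = 1$, so that Proposition \ref{general} delivers the conclusion. The conceptual content is a change of independent variable from $t$ to $\lamu$, with $q(\lamu)$ playing the role of $\dot\lamu^2$: indeed (\ref{reparam_lam}) gives $\dot\lamu^2 = q(\lamu)$ along the curve, and hence $\ddot\lamu = q'(\lamu)/2$ by differentiating in $t$.

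First I would dispose of the boundary data. By construction $x(0) = r(\lamu_0) = x_0$ and $x(\tfin) = r(\lamuf) = \xfin$. The chain rule gives $\dot x = r'(\lamu)\,\dot\lamu$, and substituting $\dot\lamu(0) = \sgn(\lamuf - \lamu_0)\sqrt{q(\lamu_0)}$, together with the hypothesis $\sqrt{q(\lamu_0)}\,r'(\lamu_0) = v_0\,\sgn(\lamuf - \lamu_0)$, yields $\dot x(0) = v_0$ (the sign factor squares out); the analogous computation at $\tfin$ gives $\dot x(\tfin) = v_{\!f}$.

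Next, I would check both equations in (\ref{opt_gen_pair}). The $\lamu$-equation $\ddot\lamu = -C'(x)$ is immediate from $\ddot\lamu = q'(\lamu)/2$ and the second equation of (\ref{reparam}). For the $x$-equation $\ddot x = \chi'(\lamu)$, differentiate the first equation in (\ref{reparam}) with respect to $\lamu$,
\[
q' r' + q\,r'' + C'(r)\,r' - \chi'(\lamu) = 0,
\]
and substitute $q' = -2 C'(r)$ to obtain $q\,r'' = \chi'(\lamu) + C'(r)\,r'$. The chain rule then gives
\[
\ddot x = r''(\lamu)\,\dot\lamu^2 + r'(\lamu)\,\ddot\lamu = q\,r'' + r'\,q'/2 = \chi'(\lamu),
\]
which in one dimension is $\nabla\chi(\lamu)$. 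Proposition \ref{general} therefore certifies $x$ as a solution of the synthesis problem. For the arbitrary duration statement, evaluate the conserved quantity (\ref{cons_law}), which for $k = 2$, $n = 1$ reads $\chi(\lamu) - C(x) - \dot x\,\dot\lamu$; using $\dot x\,\dot\lamu = r'(\lamu)\,\dot\lamu^2 = q(\lamu)\,r'(\lamu)$, this equals $\chi - C\circ r - q\,r'$, i.e. the negative of the constant in the first equation of (\ref{reparam}). If that constant vanishes, so does (\ref{cons_law}), and Proposition \ref{general} yields the arbitrary duration conclusion.

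The bulk of the difficulty has been absorbed into the hypotheses: existence of $(r,q)$ solving the reduced system on the interval with endpoints $\lamu_0$ and $\lamuf$, and solvability of the IVP (\ref{reparam_lam}) so that $\lamu$ reaches $\lamuf$ in positive time (in particular, $q$ must stay strictly positive). Granted these, the argument is a chain-rule computation. The subtlest step to keep honest is the sign bookkeeping through $\sgn(\lamuf - \lamu_0)$, especially in converting the boundary conditions on $\sqrt{q}\,r'$ to conditions on $\dot x$; exploiting $\sgn(\lamuf - \lamu_0)^2 = 1$ is essential at both endpoints.
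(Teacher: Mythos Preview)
Your proposal is correct and follows essentially the same route as the paper: differentiate (\ref{reparam_lam}) to get $\ddot\lamu = q'/2 = -C'(r)$, differentiate the first equation of (\ref{reparam}) and use $q' = -2C'(r)$ to identify $q\,r''$, and then combine via the chain rule to obtain $\ddot x = \chi'(\lamu)$; the arbitrary duration case is handled identically by rewriting $q\,r'$ as $\dot x\,\dot\lamu$. Your write-up is slightly more explicit than the paper's in verifying the velocity boundary conditions and in invoking Proposition \ref{general}, but the argument is the same.
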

\begin{proof}
Differentiation of $\dot \lamu = \sgn (\lamuf - \lamu_0) \sqrt{q}$ yields
\[
\ddot \lamu = \sgn (\lamuf - \lamu_0) \frac {\dot q}{2 \, \sqrt{q}}
= \frac {\dot q}{2 \, \dot \lamu}
= \frac {q'} 2
= - C'(r).
\]
Differentiation of the first equation in (\ref{reparam_lam})  yields
\[
0 = (q \, r' + C \circ r - \chi)' 
= q \, r'' + q' r' + (C \circ r)' - \chi'
= q \, r'' -  (C \circ r)' - \chi'.
\]
Hence
\[
\dddt (r \circ \lamu) 
= r'' (\dot \lamu)^2 + r' \, \ddot \lamu
= q \, r'' - (C \circ r)'
= \chi'.
\]
Thus $(x \circ \lamu, \lamu)$ satisfy the evolution equations (\ref{opt_gen_pair}). The boundary conditions in {\it (i)} guarantee that $r \circ \lamu$ satisfies the boundary conditions of the
synthesis problem. 

If the constant in the first equation in (\ref{reparam_lam}) is zero, then
\[
\chi = q \, r' + C \circ r 
= \dot \lamu^2 r' + C \circ r 
= \dot \lamu \, \dot r + C \circ r ,
\]
and hence (\ref{cons_law}) is zero.
\end{proof}

In the arbitrary duration case, the velocity initial condition is satisfied if and only if
\beq{reparam_bda}
\lcb \begin{array}{ll}
\chi(\lamu_0) = C(x_0) & \qquad v_0 = 0 \\
q(\lamu_0) = v_0^2 \quad \mbox{and} \quad x'(\lamu_0) \, v_0 (\lamu_0 - \lamuf) > 0 
& \qquad v_0 \neq 0
\end{array} \right . ;
\eeq
entirely analogous conditions hold for the terminal velocity.
For example, if the initial and terminal velocities are both zero and $\tilchi$ is one-to-one, 
then $|\lamu_0| = \tilchi^{-1}(C(x_0))$ and $|\lamuf| =  \tilchi^{-1}(C(\xfin))$.
\medskip

\rmk
If it is known a priori that a solution $x$ of the synthesis problem must satisfy $\dot x \neq 0$ 
for all $t$, we can reduce the fourth order system (\ref{double_order}) to a third order one 
by solving (\ref{double_cons}) for $\ddt \frac {\ddot x} {\sqrt{1 - \ddot x^2}}$, obtaining
\[
\mu \, \ddt \frac {\ddot x} {\sqrt{1 - {\ddot x}^2}} \dot x = \frac \mu {\sqrt{1 - {\ddot x}^2}} - C(x),
\]
and hence
\[
x^{(3)} = \frac {1 - \ddot x^2 } {\dot x} \lp 1 - \frac {C(x)} \mu \sqrt{1 - \ddot x^2} \rp.
\]
\prfend

\subsection{Warm-up example: constant cost controlled acceleration}
\label{warm_up}

\begin{figure}
\begin{center}
\includegraphics[width=2.5in]{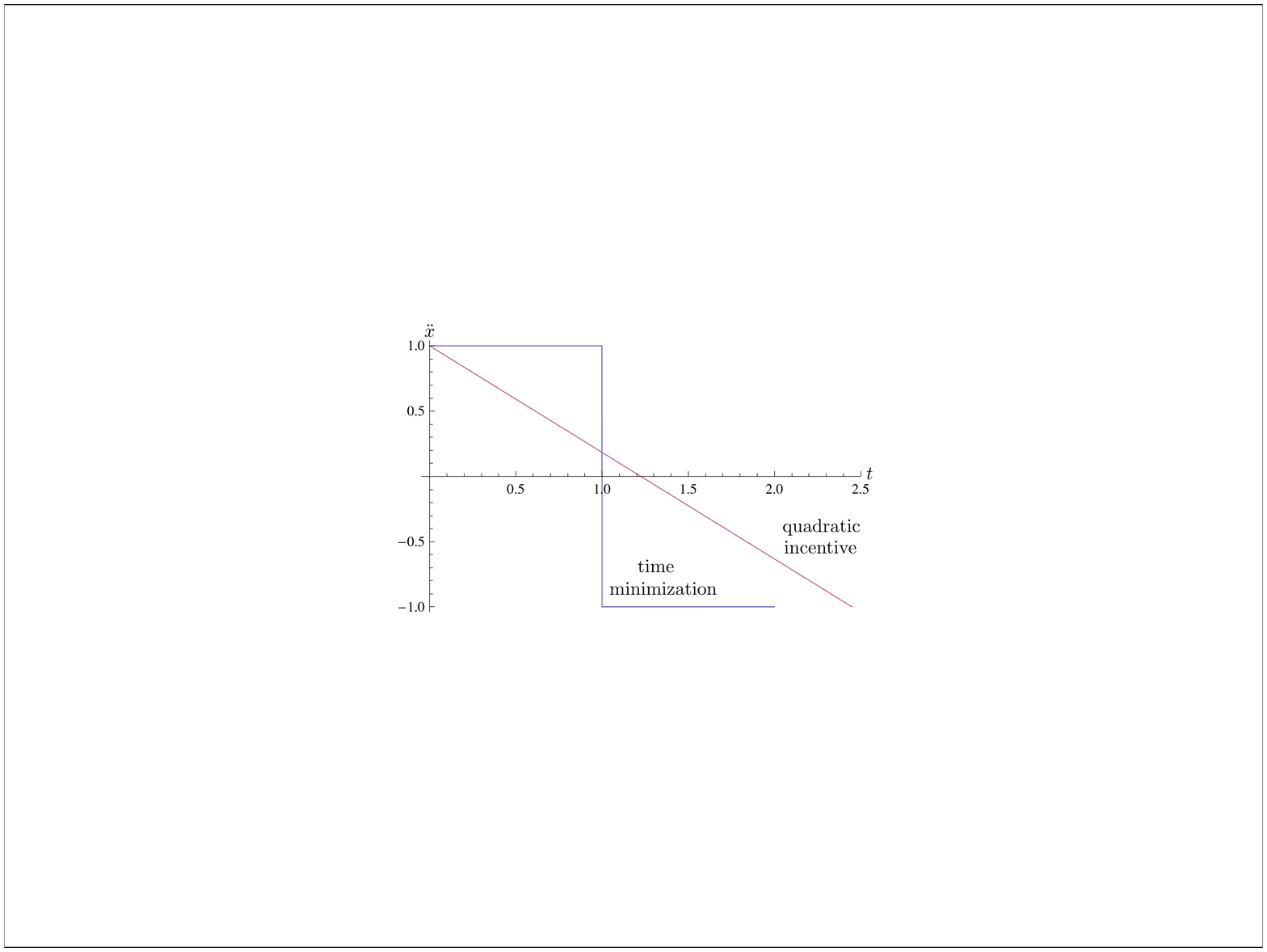} 
\quad
\includegraphics[width=2.5in]{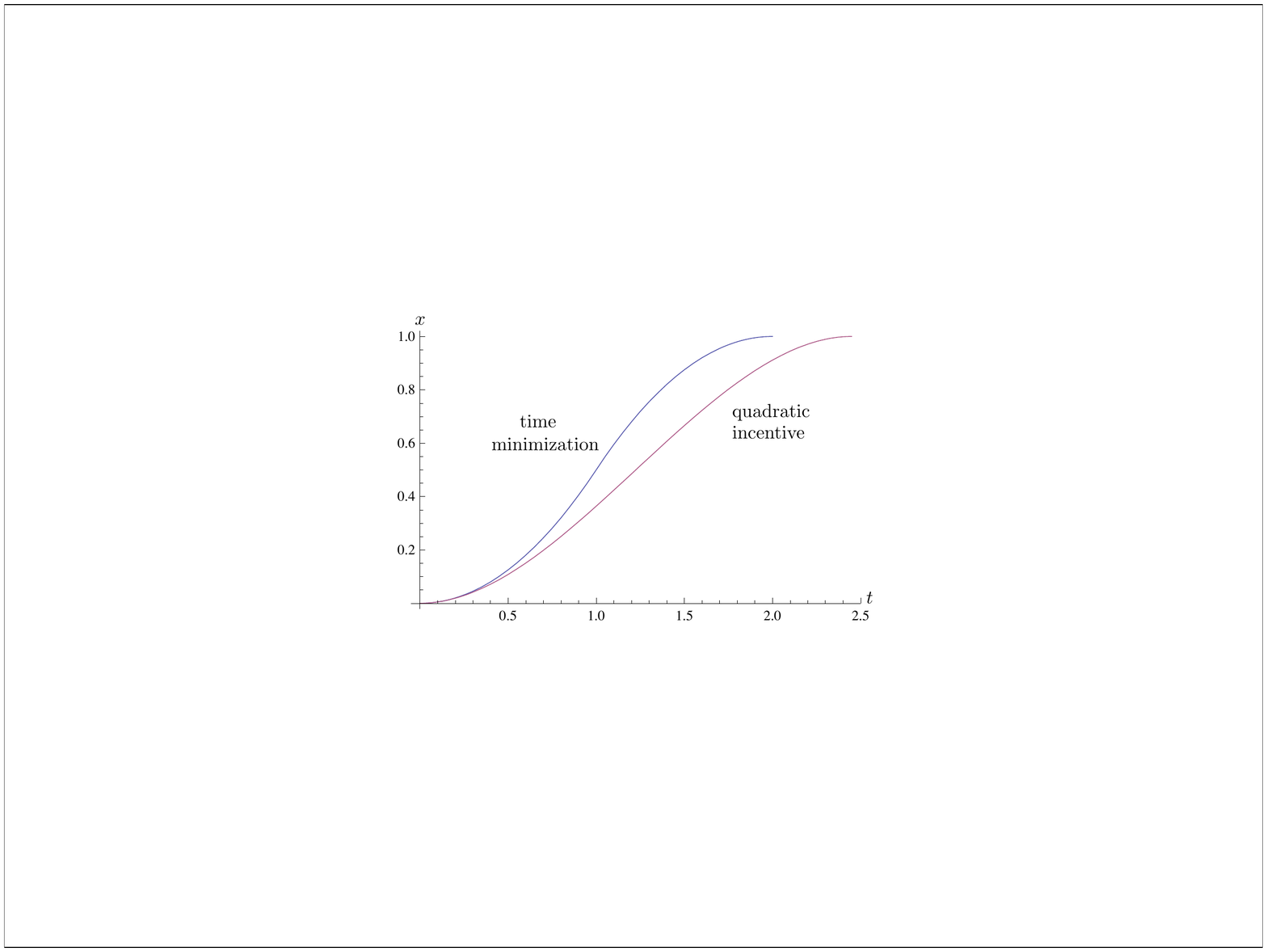} 
\end{center}
\caption{Solutions of the arbitrary duration synthesis problem for the time minimization problem, 
with trivial moderation incentive and duration $\tfin = 2$, and for the quadratic moderation 
incentive $\tilC_{\rm q}$, with $\tfin = \sqrt{6}$.
Left: acceleration $\ddot x(t)$; right: position $x(t)$.}
\end{figure}

As a simple illustrative example, we consider the one dimensional controlled acceleration problem
$\ddot x = u$, with boundary conditions 
\beq{bcs}
x(0) = 0, \qquad x(\tfin) = 1, \sands \dot x(0) = \dot x(\tfin) = 0
\eeq 
for some final time $\tfin > 0$, and constant cost function $C \equiv 1$.  
The trivial incentive version of this problem appears as the introductory 
example in \cite{PBGM}, and appears in several other control
texts. The `bang-bang' solution
\beq{bang}
x(t) = t + \half (|1 - t|(1 - t) - 1)
= \lcb \begin{array}{ll}
\frac {t^2} 2 & 0 \leq t \leq 1\medskip \\ -\frac {t^2} 2 + 2 \, t - 1  \qquad  & 1 < t \leq 2
\end{array} \right . ,
\eeq
with peak control effort throughout the maneuver, is optimal; see, e.g. \cite{Kirk}. 
Given a moderation incentive $\tilC: [0, 1] \to [0, 1]$, we 
seek a solution of the synthesis problem with boundary conditions
(\ref{bcs}) and unmoderated cost $C \equiv 1$.
Our treatment is a straightforward application of Proposition \ref{reparam_prop}. 
\begin{Proposition} 
\label{warmup_prop}
If $\tilC$ is a moderation incentive, then for any $\lamu_0 > 0$
\beq{first_eq}
q = 2 \,  \int_0^{\lamu_0} (\tilchi(\lamu_0) - \tilchi(s)) ds,
\qquad
r(\lamu) =  \frac 1 q \, \int_{\lamu}^{\lamu_0} (\tilchi(\lamu_0) - \tilchi(|s|)) ds,
\sands
\tfin = \frac {2 \, \lamu_0}{\sqrt{q}}
\eeq
determine a solution
\[
x(t) := r \lp \lamu_0 \lp 1 - 2 \, \frac  {t} {\tfin} \rp \rp, \qquad \qquad 0 \leq t \leq \tfin,
\]
of the synthesis problem with boundary conditions (\ref{bcs})
and cost function $1 - \tilC$. If $\tilchi(\lamu_0) = 1$, then $x$ is also 
a solution of the arbitrary duration synthesis problem. 
\end{Proposition}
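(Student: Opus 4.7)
The plan is to invoke Proposition \ref{reparam_prop} with unmoderated cost $C \equiv 1$ and terminal auxiliary value $\lamuf := -\lamu_0$, taking $q$ and $r$ to be the functions given by (\ref{first_eq}). Because $C' \equiv 0$, the second equation in (\ref{reparam}) reduces to $q' = 0$, so treating $q$ as a constant is consistent; that constant is strictly positive since Lemma \ref{sig_lemma} implies $\tilchi$ is strictly increasing, so the integrand $\tilchi(\lamu_0) - \tilchi(s)$ is positive on $[0, \lamu_0)$.

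The bulk of the verification is then a short direct computation. By the fundamental theorem of calculus, $r'(\lamu) = -(\tilchi(\lamu_0) - \tilchi(|\lamu|))/q$, so substitution into the first equation of (\ref{reparam}) yields
\[
q \, r'(\lamu) + C(r(\lamu)) - \chi(\lamu)
= \lp \tilchi(|\lamu|) - \tilchi(\lamu_0) \rp + 1 - \tilchi(|\lamu|)
= 1 - \tilchi(\lamu_0),
\]
which is manifestly constant and vanishes precisely when $\tilchi(\lamu_0) = 1$; this accounts for the arbitrary duration clause. The position boundary conditions $r(\lamu_0) = 0$ and $r(-\lamu_0) = 1$ are immediate from the definition of $r$, once one splits the integral at $0$ using symmetry of $\tilchi(|s|)$ and invokes the prescribed value of $q$. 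The zero velocity conditions $\sqrt{q} \, r'(\pm \lamu_0) \, \sgn(\lamuf - \lamu_0) = 0$ reduce to $r'(\pm \lamu_0) = 0$, which is obvious from the formula for $r'$.

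For condition {\it (ii)} of Proposition \ref{reparam_prop}, the IVP (\ref{reparam_lam}) becomes $\dot \lamu = -\sqrt{q}$, $\lamu(0) = \lamu_0$ (since $\sgn(\lamuf - \lamu_0) = -1$), with explicit solution $\lamu(t) = \lamu_0 - \sqrt{q} \, t$ reaching $\lamuf = -\lamu_0$ at $\tfin = 2 \lamu_0/\sqrt{q} > 0$, matching (\ref{first_eq}). Composing, $x(t) = r(\lamu(t)) = r\lp \lamu_0 - \sqrt{q} \, t \rp = r\lp \lamu_0 (1 - 2t/\tfin) \rp$, as claimed, and Proposition \ref{reparam_prop} then delivers a solution of the synthesis problem, and of its arbitrary duration counterpart whenever $\tilchi(\lamu_0) = 1$.

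The only real subtlety is the regularity of $x$ at $t = \tfin/2$, where $\lamu$ changes sign and the $|\cdot|$ in the integrand defining $r$ might create a corner. A short computation using $\dot \lamu = -\sqrt{q}$ and $\tilchi' = \sigma$ (Lemma \ref{sig_lemma}) gives $\ddot x(t) = \sigma(|\lamu(t)|) \, \sgn(\lamu(t))$, which is continuous across $\lamu = 0$ when $\sigma(0) = 0$ (e.g.\ for the elliptic incentives), and otherwise exhibits exactly the kind of jump discontinuity already permitted by the piecewise smoothness and left-continuous convention built into the synthesis problem -- with the trivial incentive recovering the classical bang-bang solution (\ref{bang}).
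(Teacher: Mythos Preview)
Your proof is correct and follows essentially the same route as the paper's: verify the hypotheses of Proposition~\ref{reparam_prop} for $C\equiv 1$, $\lamuf=-\lamu_0$, checking that the given $q$ and $r$ satisfy (\ref{reparam}) with constant $1-\tilchi(\lamu_0)$ and the four boundary conditions, and then solve the linear IVP for $\lamu$ explicitly. Your added remarks on the positivity of $q$ (via Lemma~\ref{sig_lemma}) and on the regularity of $\ddot x$ at the midpoint are not in the paper's proof but are welcome clarifications.
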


\begin{proof}
The constant $q$ and function $r$ given by (\ref{first_eq}) clearly satisfy the 
evolution equations $q' = 0 = - 2 \, C'(x)$ and
\[
q \, r' + C \circ r - \chi = \tilchi(|\lamu|) - \tilchi(\lamu_0) + 1 - \chi(\lamu)
= 1 - \tilchi(\lamu_0)
\]
and boundary conditions  $r(\lamu_0) = 0$, $r(-\lamu_0) = 1$, and $r'(\pm \lamu_0) = 0$.
The auxiliary function $\lamu(t) := \lamu_0 \lp 1 - 2 \, t/\tfin \rp$ 
satisfies $\lamu(0) = \lamu_0$, $\lamu(\tfin) = - \lamu_0$
and $\dot \lamu = - \sqrt{q} = \sgn (\lamuf - \lamu_0) \sqrt{q}$. Hence Proposition
\ref{reparam_prop} implies that $x$ is a solution of the synthesis problem.
\end{proof}

We now apply Proposition \ref{warmup_prop} to the trivial, quadratic, and elliptical incentives.  
For the time minimization problem,  with trivial incentive $\tilC_{\rm tm} \equiv 0$, we have
$\tilchi_{\rm tm}(s) = s$, and hence
\[
r_{\rm tm}(\lamu) = \lamuf \lamu + \half \lp \lamuf^2 - |\lamu| \lamu \rp,
\sands
\tfin = \frac {2 \, \lamuf} { \sqrt{r_{\rm tm}(\lamuf)}} = 2.
\]
Thus we obtain the well-known `bang-bang' solution (\ref{bang})
for any positive $\lamuf$. The total cost is simply the duration, 2. 
Note that the condition 
$\tilchi(\lamuf) = 1$ is not necessary---the criticality (with respect to duration) condition leading 
to $\tilchi(\lamuf) = 1$ need not be satisfied at the end point of the range 
of possible durations.

The quadratic incentive, $\tilC_{\rm q}(s) = \half \lp 1 - s^2 \rp$ has the piecewise smooth
scalar incentive potential
\[
\tilchi_{\rm q}(s) = \lcb \begin{array}{ll}
\half \lp 1 + s^2  \rp & \qquad 0 < s \leq 1\medskip \\
s & \qquad s > 1
\end{array} \right . .
\]
It follows that if $0 < \lamu_0 \leq 1$, then (\ref{first_eq}) takes the form 
\[
r_{\rm q}(\lamu) = \frac {\lamu_0^3} 6 \lp 2 - \frac {\lamu}{\lamu_0} \rp \lp 1 +  \frac {\lamu}{\lamu_0} \rp^2
\sands
\tfin = \sqrt{\frac 6 {\lamu_0}} ,
\]
and hence
\beq{xq1}
x_{\rm q}(t; \tfin) = \lp \frac {t}{\tfin} \rp^2 \lp 3 - 2 \, \frac {t}{\tfin} \rp, 
\qquad \qquad \tfin \geq \sqrt{6}.
\eeq
The arbitrary duration synthesis problem solution is given by $\lamu_0 = 1$;
note  that $\tfin = \sqrt{6}$ is the minimum duration for the smooth $\tilC_{\rm q}$ solutions. 
(If $\lamu_0 > 1$, the corresponding solutions of the synthesis problem are 
piecewise smooth; we do not construct these solutions here.)

The total cost for the trajectory (\ref{xq1}) is
\[
\mbox{cost}(\tfin) = \int_0^{\tfin} (1 - \tilC_{\rm q}(\ddot x(t; \tfin))) dt 
=  \half \int_0^{\tfin} \lp 1 + \ddot x(t; \tfin)^2 \rp dt 
= \frac {\tfin} 2 + \frac 6 {\tfin^3}
\]
if $\tfin \geq \sqrt{6}$ (and hence $\lamu_0 \leq 1$). 
Note that if we replace the moderated cost function 
$1 - \tilC_{\rm q}(u) = \half \lp 1 + u^2 \rp$ with the kinetic
energy-style cost function $\frac {u^2} 2$, the evolution equation is unchanged,
but the total cost $\frac 6 {\tfin^3}$ is now a strictly decreasing function of 
the maneuver duration $\tfin$; hence none of the solutions of
this synthesis problem are solutions of the arbitrary duration problem. 
The convention that the moderation incentive equal zero on the boundary of the
admissible control region determines the constant that `selects' the fastest
smooth solution of the evolution equation as the solution of the arbitrary
duration problem.

\begin{figure}[t]
\begin{center}
\includegraphics[width=3in]{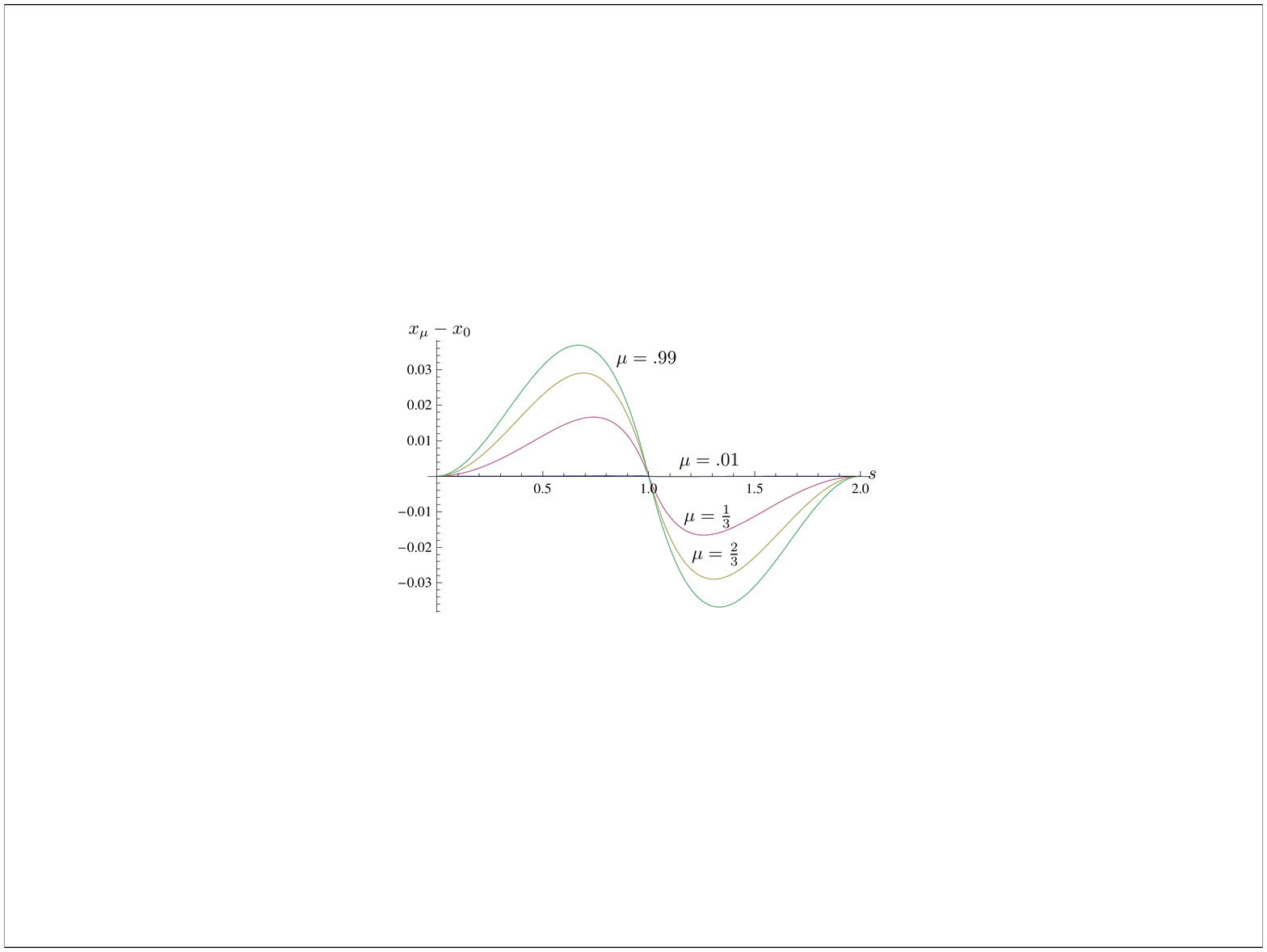} 
\includegraphics[width=3in]{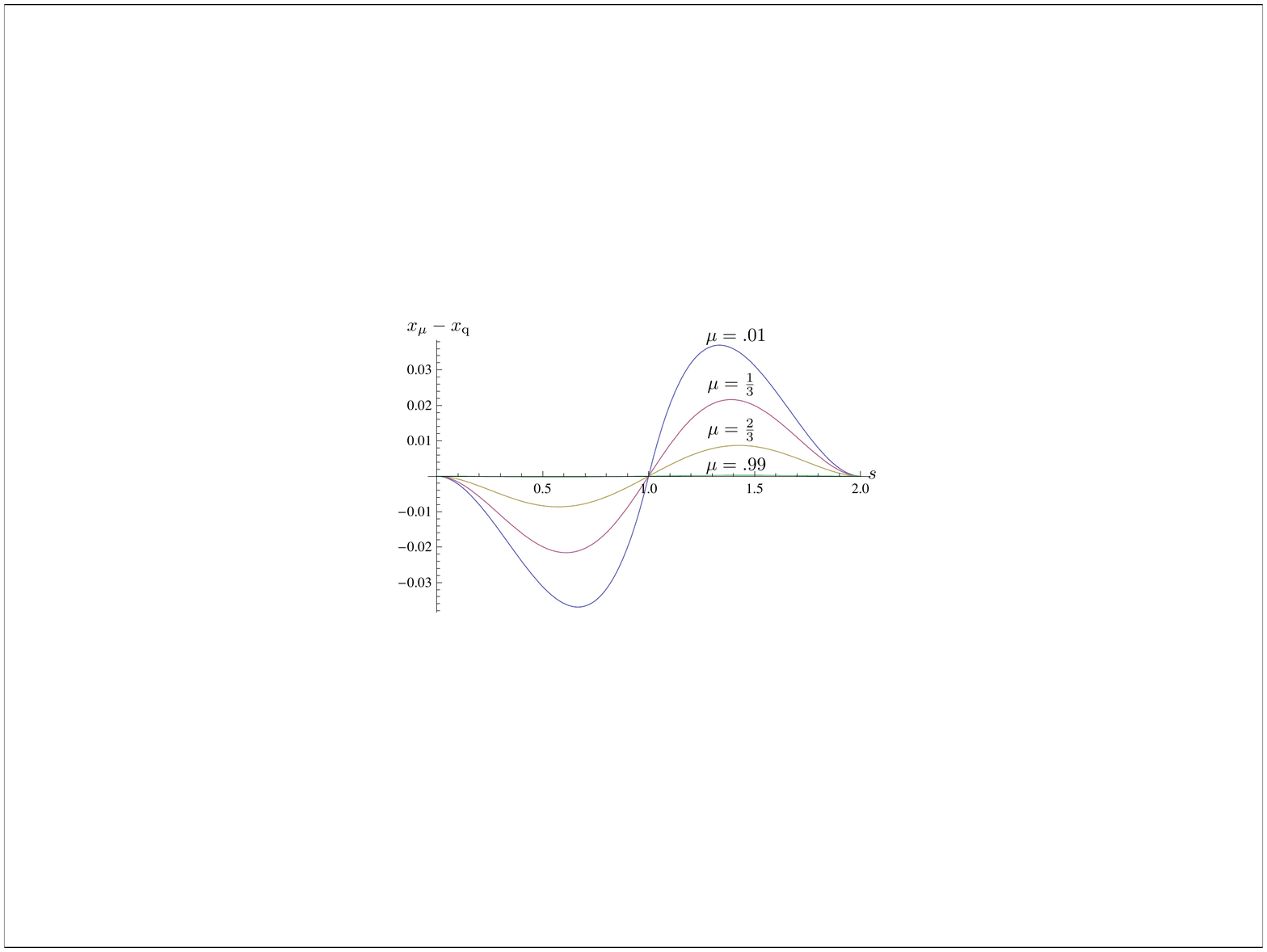} \smallskip \\
\includegraphics[width=3in]{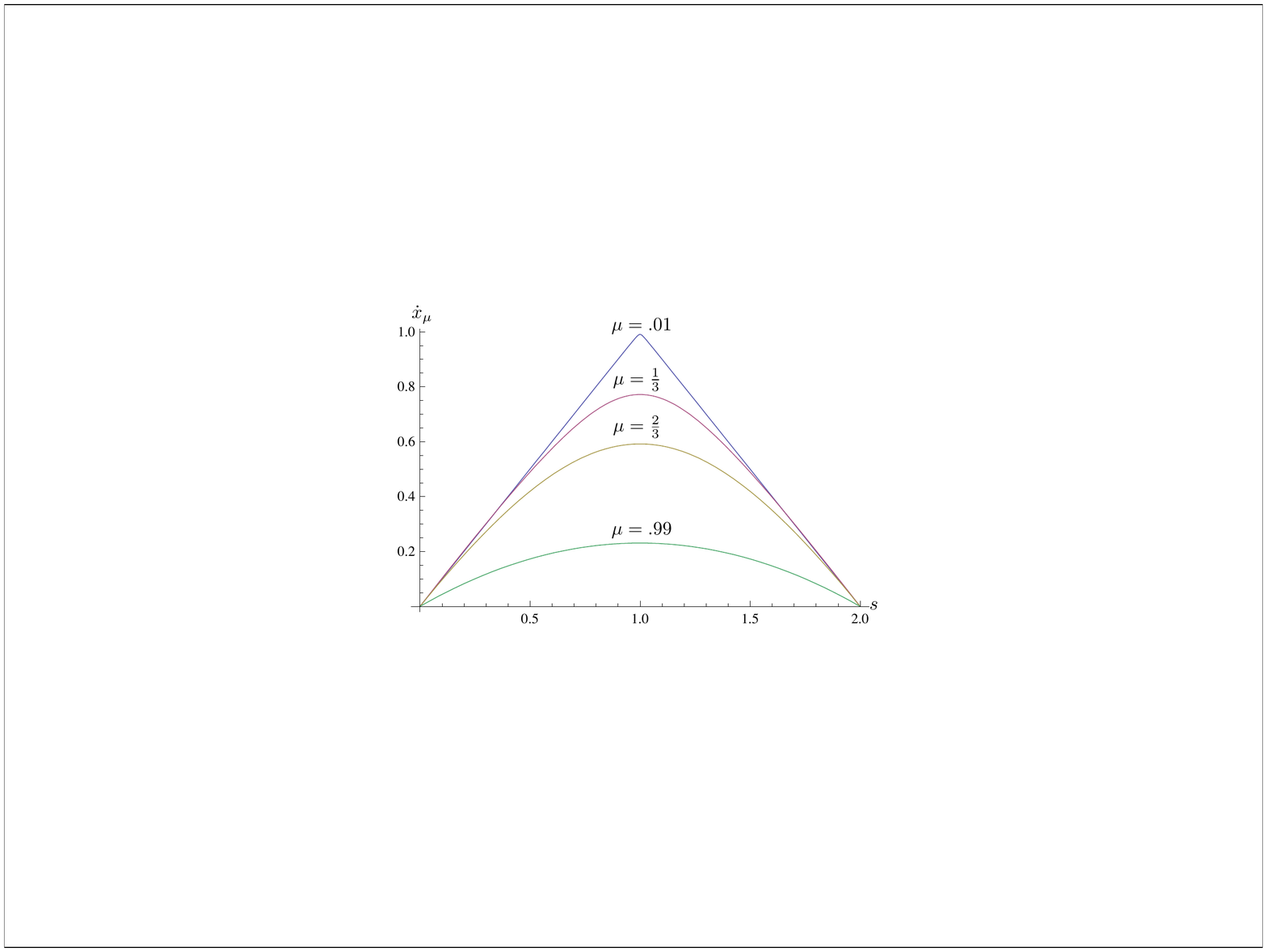} 
\includegraphics[width=3in]{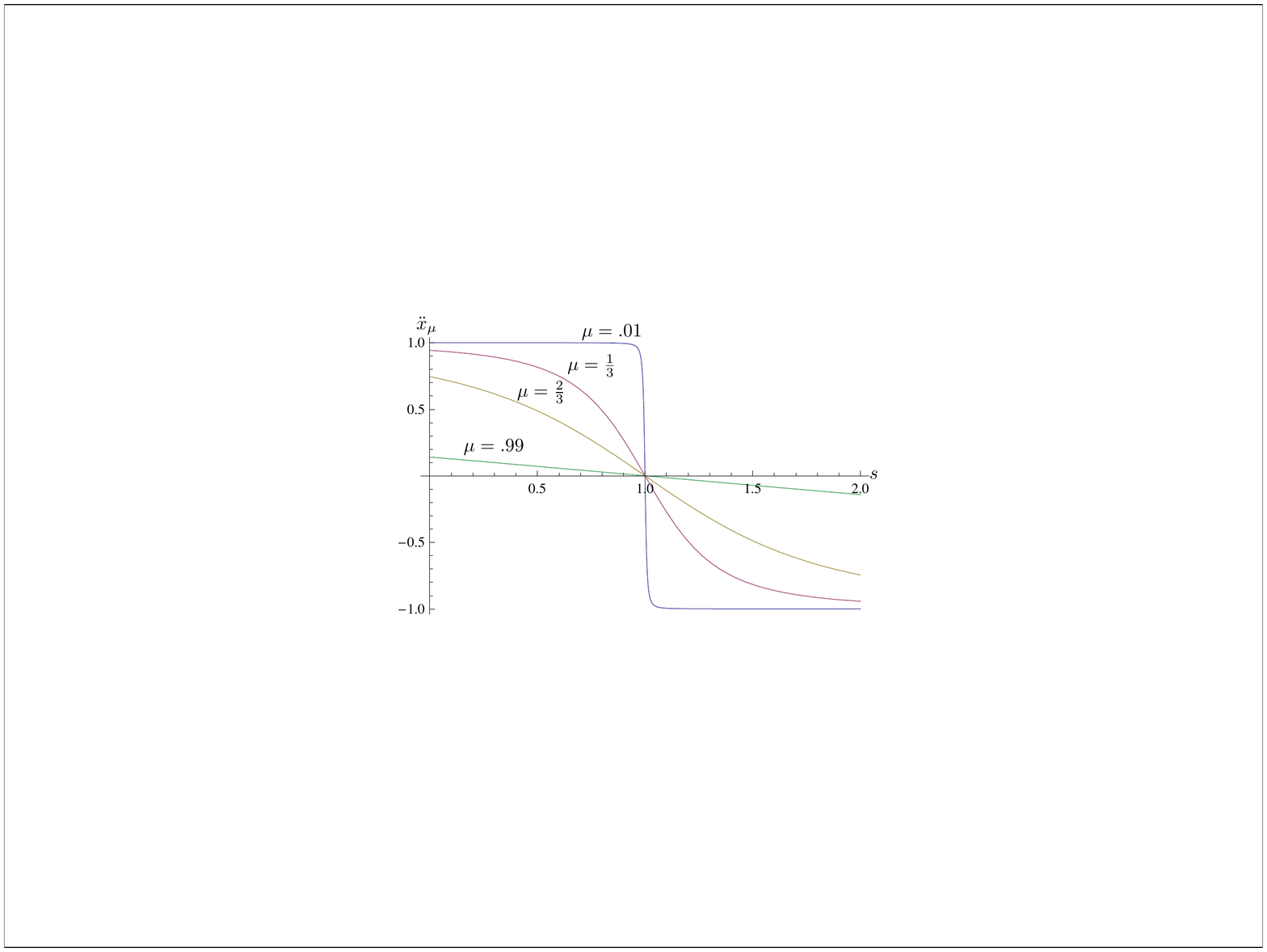} 
\end{center}
\caption{\label{tm_nc} Elliptical moderation incentive (EMI) solutions, plotted with respect to the rescaled time
$s := \frac {2 \, t}{\tfin(\mu)}$ : $\mu = 10^{-2}$, $\frac 1 3$, $\frac 2 3$, $1 - 10^{-2}$.
Upper left: differences between EMI solutions $x_\mu(s \, \tfin(\mu))$ and the time minimizing solution
$x_0(s)$; upper right: differences between EMI solutions $x_\mu(s \, \tfin(\mu))$ 
and the rescaled time quadratic incentive solution $x_{\rm q}(s)$;
lower left: velocities $\dot x_\mu(s \, \tfin(\mu))$;
lower right: accelerations: $\ddot x_\mu(s \, \tfin(\mu))$.}
\end{figure}

We now turn to the elliptic moderation incentives  
$\tilC_\mu(s) = \mu \sqrt{1 - s^2}$,  $0 < \mu \leq 1$, with
$\tilchi_\mu(s) = \sqrt{\mu^2 + s^2}$. (Note that $\tilC_0 = \tilC_{\rm tm}$.)
If we let
\[
g_\mu(\lamu) := - \half \lp  \lamu \, \chi_\mu(\lamu) + \mu^2 \ln \lp \chi_\mu(\lamu) + \lamu \rp \rp
\]
denote an anti-derivative of $\tilchi_\mu$, then
\beqa
r_\mu(\lamu) &=& \chi_\mu(\lamuf)(\lamu + \lamuf) + g_\mu(-\lamuf) - g_\mu(\lamu) \\
&=& \half \lp \chi_\mu(\lamuf) (\lamu + \lamuf) + (\chi_\mu(\lamuf) - \chi_\mu(\lamu)) \lamu
	- \mu^2 \ln \lp \frac {\chi_\mu(\lamu) + \lamu}{\chi_\mu(\lamuf) - \lamuf} \rp \rp.
\eeqa
The expression for $r_\mu$ simplifies somewhat for the solution to the arbitrary duration problem, 
for which $\chi_\mu(\lamuf) = 1$ and hence
\beq{rmu}
r_\mu(\lamu) = 
\half \lp \lamu + \sqrt{1 - \mu^2} + (1 - \chi_\mu(\lamu)) \lamu
	+ \mu^2 \ln \lp \frac {1 - \sqrt{1 - \mu^2}}{\chi_\mu(\lamu) + \lamu} \rp \rp.
\eeq
In this case, 
\beq{rmuf}
r_\mu(\lamuf) 
= \sqrt{1 - \mu^2} + \mu^2 \ln \frac {\mu}{1 + \sqrt{1 - \mu^2}}.
\eeq
 
We plot some information about  the solutions $x_\mu$ for some sample values of $\mu$ in 
Figure \ref{tm_nc}. To facilitate comparison, we plot $x_\mu$ and its derivatives using the 
scaled time $s := \frac {2 \, t}{\tfin(\mu)}$. Figure \ref{EMI_cost_dur} shows the total cost and
duration for the solutions of the arbitrary time synthesis problem, plotted as functions of the
moderation parameter $\mu$. 

As the moderation parameter goes to zero, the EMI control problem approaches the time minimization
one: 
\[
\lim_{\mu \to 0}(\tilC_\mu,  \sigma_\mu, x_\mu) = (\tilC_{\rm tm},  \sigma_{\rm tm}, x_{\rm tm}).
\]
In the limit $\mu \to 1$, the solutions $x_\mu$ of the arbitrary duration EMI problem
approach those of equal duration of the quadratic incentive synthesis problem. Specifically, 
\[
\lim_{\mu \to 1} \frac {x_\mu(t)}{x_{\rm q}(t; \tfin(\mu))} = 1 \qquad \mbox{for \quad $0 < t \leq \tfin(\mu)$},
\]
while $\lamuf = \sqrt{1 - \mu^2}$ and (\ref{rmuf}) imply that
\[
\lim_{\mu \to 1} \tfin(\mu)^2 \sqrt{1 - \mu^2} = 
\lim_{\mu \to 1} \frac {4 \, \lamuf^3}{r_\mu(\lamuf)}  
= 6.
\] 
(Recall that $\sqrt{6}$ is the duration of the solution of the arbitrary duration quadratic incentive 
problem.) As suggested by these limits and Figure \ref{tm_nc}, the family of EMI
solutions can regarded as linking those of the trivial and quadratic incentive problems. 

\begin{figure}[h]
\begin{center}
 \includegraphics[width=2.25in]{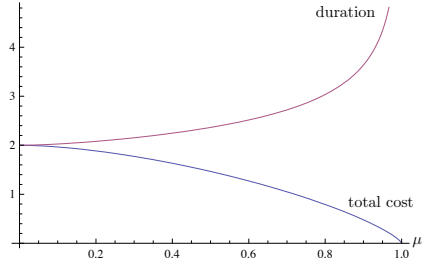} 
 \end{center}
\caption{\label{EMI_cost_dur}EMI solutions: duration, $\tfin(\mu)$, and total cost, $\mbox{cost}(\mu)$.}
\end{figure}

\rmk
For the EMI synthesis problem with $0 < \mu \leq 1$, the pair of second order ODEs 
(\ref{opt_gen_pair}) is equivalent to the fourth order ODE
\beq{spec_duration}
\mu \, \dddt \frac {\ddot x} {\sqrt{1 - \ddot x^2}} = - \nabla C(x);
\eeq
see Corollary \ref{double_cor}. 
In the present example, with constant $C$, the (nonzero) moderation parameter $\mu$ plays no role in 
(\ref{spec_duration}), and hence has no influence on the solution of the specified duration
problem. The parameter $\mu$ does, however,  effect  the solution of the arbitrary duration problem.
The analog of the condition $\tilchi(|\lamu_0|) = C(x_0)$ for an EMI controlled acceleration problem
with zero initial velocity is the initial acceleration condition
\[
\frac \mu {\sqrt{1 - \ddot x(0)^2}} = C(x_0),
\qquad \mbox{i.e.} \qquad
|\ddot x(0)| = \sqrt{C(x_0)^2 - \mu^2}.
\]
The solution to the specified duration problem with cost function $C_\mu$, $0 < \mu \leq 1$,
and duration $T > 2$ is $x_{\tfin^{-1}(T)}$. Thus the expressions (\ref{rmu}) and (\ref{rmuf}) are
sufficient to determine the solutions for both the specified and arbitrary duration problems; as in
the time minimization case, there is some redundancy in the $(x, \lamu)$ formulation.
\rmkend

\subsection{A one-dimensional controlled acceleration example: spooking}

We consider a generalization of the controlled acceleration example from Section \ref{warm_up},
adding a position penalty to the cost function; specifically, $C: \R \to [1, \infty)$,
with $C(1) = 1$ (the target is a `no-cost' position)  and $C(x) > 1$ for $x \in [0, 1)$.
This can be regarded as a very simple model of spooking---the reaction of, e.g.,
a grazing herbivore to unexpected abrupt noise or motion. When startled, the
animal will initially rush away from the disturbance; when it has reached its `comfort zone', 
it will either turn to examine the threat or resume grazing.  In our simple one-dimensional 
model, the function $C(x)$ acts as 
a `fear factor', modeling the undesirability of remaining near the perceived threat and 
providing an incentive to move rapidly to the comfort point $x = 1$.  

We consider the position-dependent cost function
\[
C(x) = 1 + \frac c 2 (1 - x)^2.
\]
The boundary conditions $x(0) = \dot x(0) = \dot x(\tfin) = 0$ and $x(\tfin) = 1$ imply
that a solution $(x, \lamu)$ of the arbitrary duration problem satisfies
\beq{spook_bcs}
\chi(\lamu_0) = C(0) = 1 + \frac c 2
\sands 
\chi(\lamuf) = 1.
\eeq
In addition, $\lamu$ must change sign at least once, since the boundary conditions for $x$
imply that $\sgn \ddot x = \sgn \lamu$ must change at least once. We assume that $\ddot x(0)$
is positive, and hence $\lamu_0 = \tilchi^{-1} \lp1 + \frac c 2 \rp$. 

\begin{figure}[t]
\includegraphics[height=1.75in]{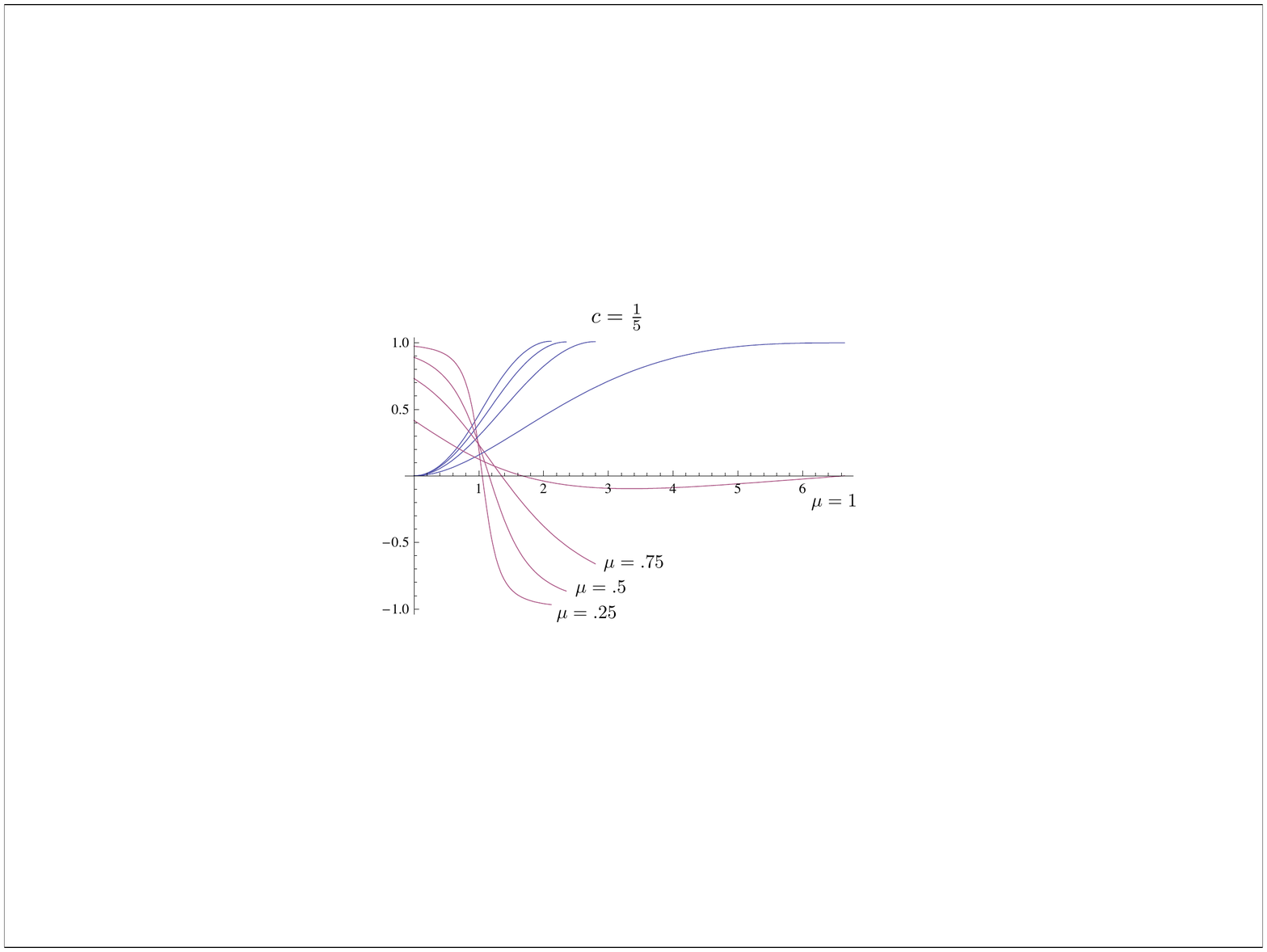} 
\includegraphics[height=1.75in]{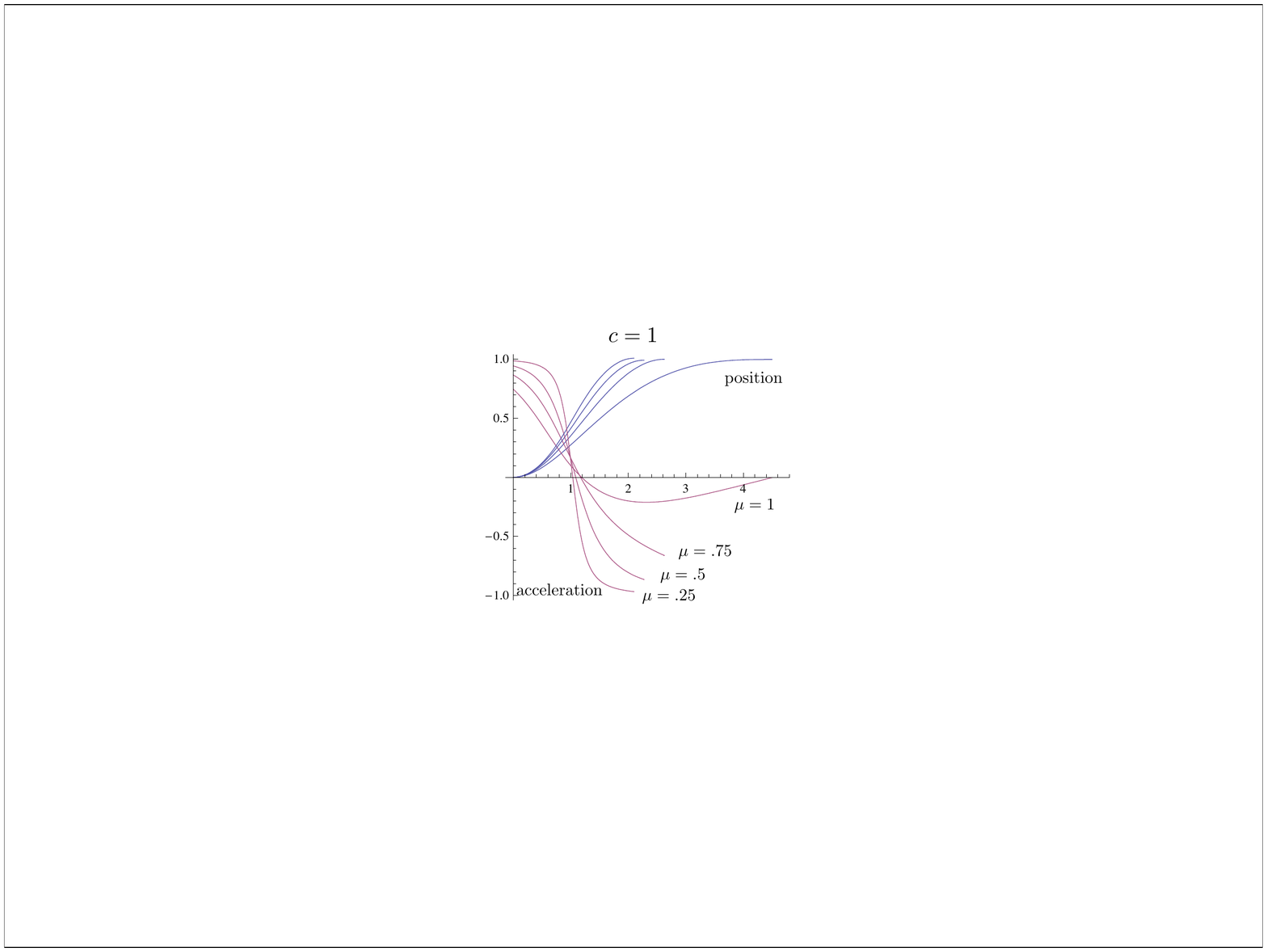} 
\includegraphics[height=1.75in]{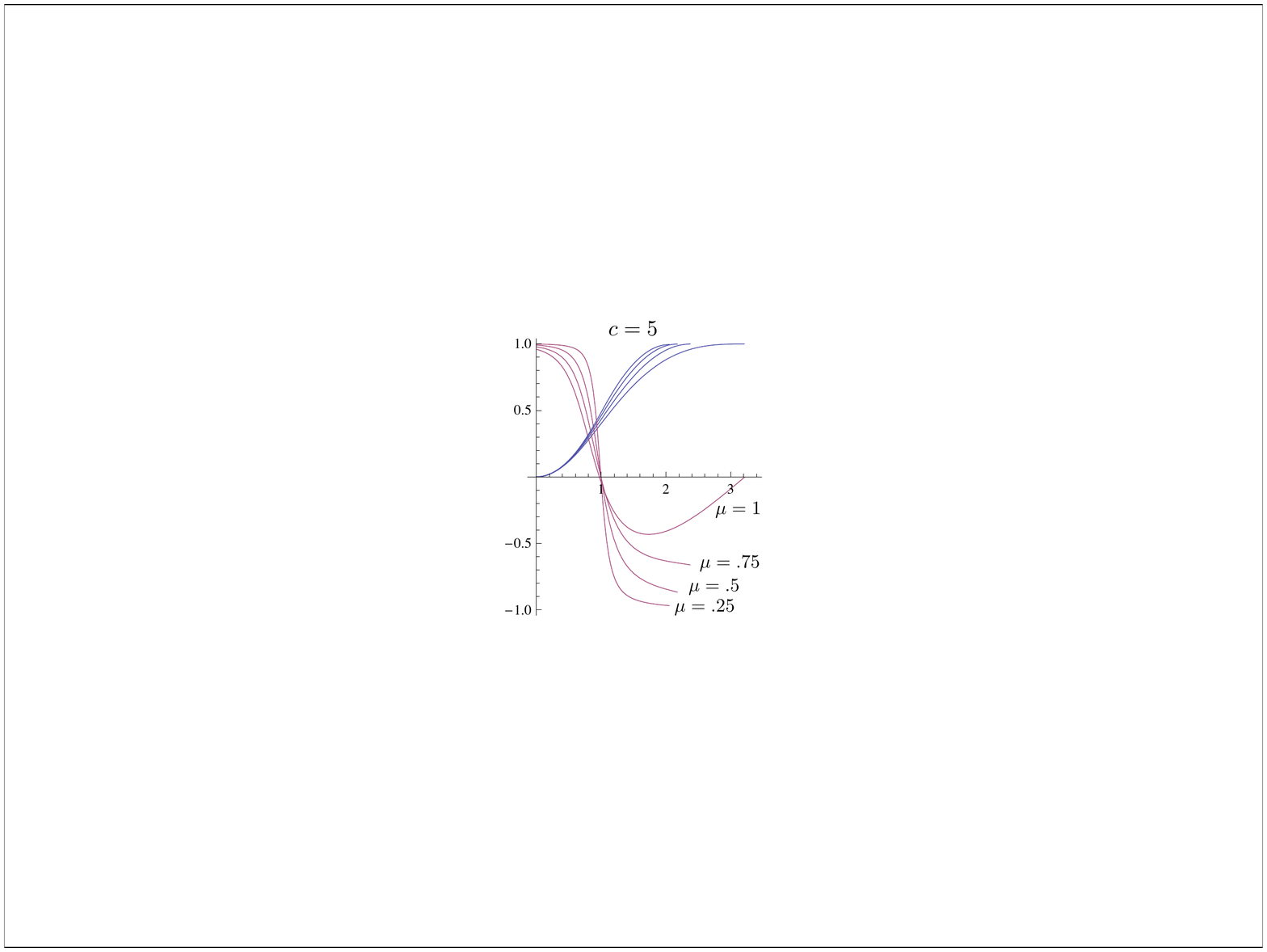} 
\caption{\label{spook_samples} Position and accleration plots for the solutions of the EMI synthesis
problem for $c = \frac 1 5$,  1, and 5, and $\mu = \frac 1 4$, $\frac 1 2$, 
$\frac 3 4$, and 1.}
\end{figure}

We first analyse the arbitrary duration synthesis problem for the moderation incentives $\tilC_\mu$.
Lacking closed form solutions for either the second order system 
\beq{second_spook}
\ddot x = \frac {\lamu}{\sqrt{\mu^2 + \lamu^2}}
\sands
\ddot \lamu = c \, (1 - x).
\eeq
derived from (\ref{opt_gen_pair}) for $C(x) = 1 + \frac c 2 (1 - x)^2$ and $\tilC_\mu$ or 
the fourth order ODE 
\beq{fourth_spook}
\mu \, \smallfrac {d^2 \ }{d t^2} \frac {\ddot x} {\sqrt{1 - \ddot x^2}} =  c (1 - x)
\eeq
derived from  (\ref{double_order}),  we turn to the reparametrized form of the system (\ref{reparam}), 
which can be implemented using standard numerical BVP routines. (Our numerical approximations 
were computed using the built-in {\sl Mathematica} function {\tt NDSolve}.) The boundary conditions on 
$\lamu$ take the form
\beq{boundary_lam}
\lamu_0 =  \lamu_0(c, \mu) := \sqrt{\lp 1 + \frac c 2 \rp^2 - \mu^2}
\sands
|\lamuf| = \sqrt{1 - \mu^2}.
\eeq
A solution of the reparametrized problem determines a solution of the synthesis problem
such that the auxillary function $\lamu$ has nonzero first derivative; $\lamu$ changes
sign at most once (and hence exactly once) in this situation. Hence we take 
$\lamuf = -  \sqrt{1 - \mu^2}$.  The reparametrized problem  takes the form of seeking
a solution $(r, q): [\lamu_0, \lamuf] \to \R \times \R^+$ of the BVP
\[
q \, r'  + 1 + \frac c 2 (1 - r)^2 = \sqrt{\mu^2 + \lamu^2}
\sands
q' = 2 \, c \, (1 - r),
\]
with boundary conditions $r(\lamu_0) = 0$ and $r(\lamuf) = 1$  for 
$\lamu_0 = \lamu_0(c, \mu)$ and $\lamuf = -  \sqrt{1 - \mu^2}$. This BVP can be solved
numerically (using, e.g., a shooting method).  Once a solution has been found, the 
elapsed time can be found as a function of $\lamu$ by numerically computing the integral
\[
t(\lamu) = \int_{\lamu_0}^\lamu \frac {d s}{\sqrt{q(s)}}.
\]
The desired solution $x$ is given by $x = r  \circ t^{-1}$.
Figure \ref{spook_samples} shows some sample plots, 
with $c = \frac 1 5$,  1, and 5, and $\mu = \frac 1 4$, $\frac 1 2$, 
$\frac 3 4$, and 1.

\rmk
We shall see that, as in the previous example, the total cost function for the QCC
problem is a monotonically decreasing function of the maneuver duration, with countably
many inflection points, corresponding to trajectories that oscillate about the target before
coming to rest. We do not rule out the possibility that such oscillations may also  lead to a 
decrease in cost for the elliptical moderation incentives; our numerical searches are directed 
only towards the identification of trajectories that do not overshoot the target.  
\rmkend

Note that  Figure \ref{spook_samples} shows little response to the position penalty strength $c$ for 
small values of $\mu$---the solutions remain close to the corresponding solutions for $c = 0$. 
In Figure \ref{spook_accel_diffs}
we plot $\ddot x_\mu(t; c) - \ddot x_\mu(t; 0)$ for $\mu = \frac 1 8$, $\frac 1 4$, and $\frac 1 2$, and
$c = \frac 1 5$ and 1. 
On the other hand, the solutions for $\mu$ near 1 are strongly influenced by the position 
penalty; the initial acceleration increases dramatically with $c$ and the acceleration curve changes
from concave to convex as $c$ increases. A sufficiently frightening event will provoke an initially strong
response even when the moderation incentive is high, but the later stages of the recovery from that 
initial response are largely determined by the moderation incentive. This roughly agrees with 
actual spooking behavior: when startled, an inexperienced animal will often respond by first 
bolting, then abruptly halting and
whirling about to examine the apparent threat while still in a state of high excitement; 
a more experienced one can still be spooked, but quickly regains its composure in the absence of 
real danger and gradually decelerates, reducing the significant skeletomuscular stresses of a hard 
stop and risk of self-inflicted injury during rapid motion. 

\begin{figure}[h]
\begin{center}
\includegraphics[width=2.75in]{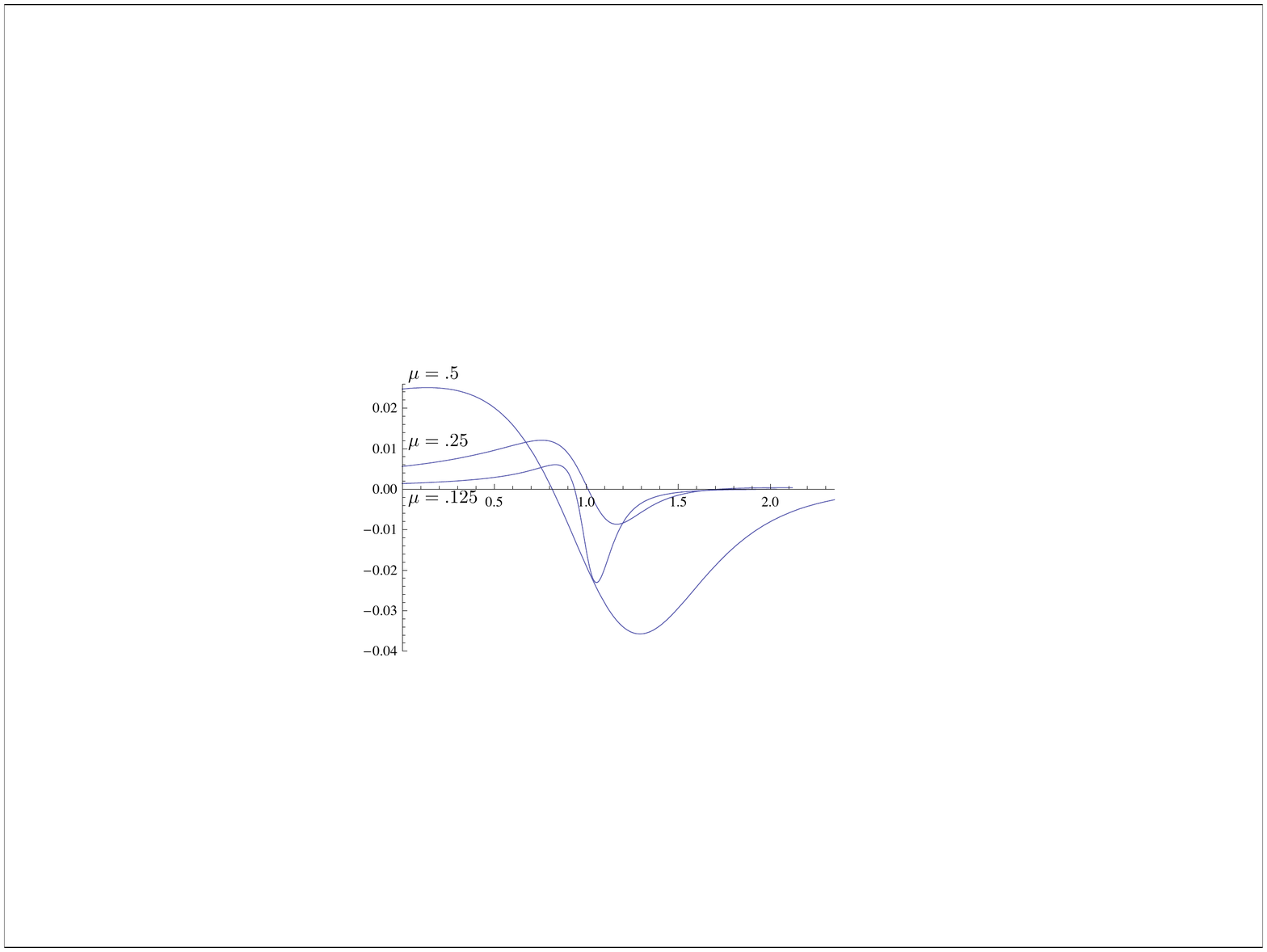} 
\qquad \includegraphics[width=2.75in]{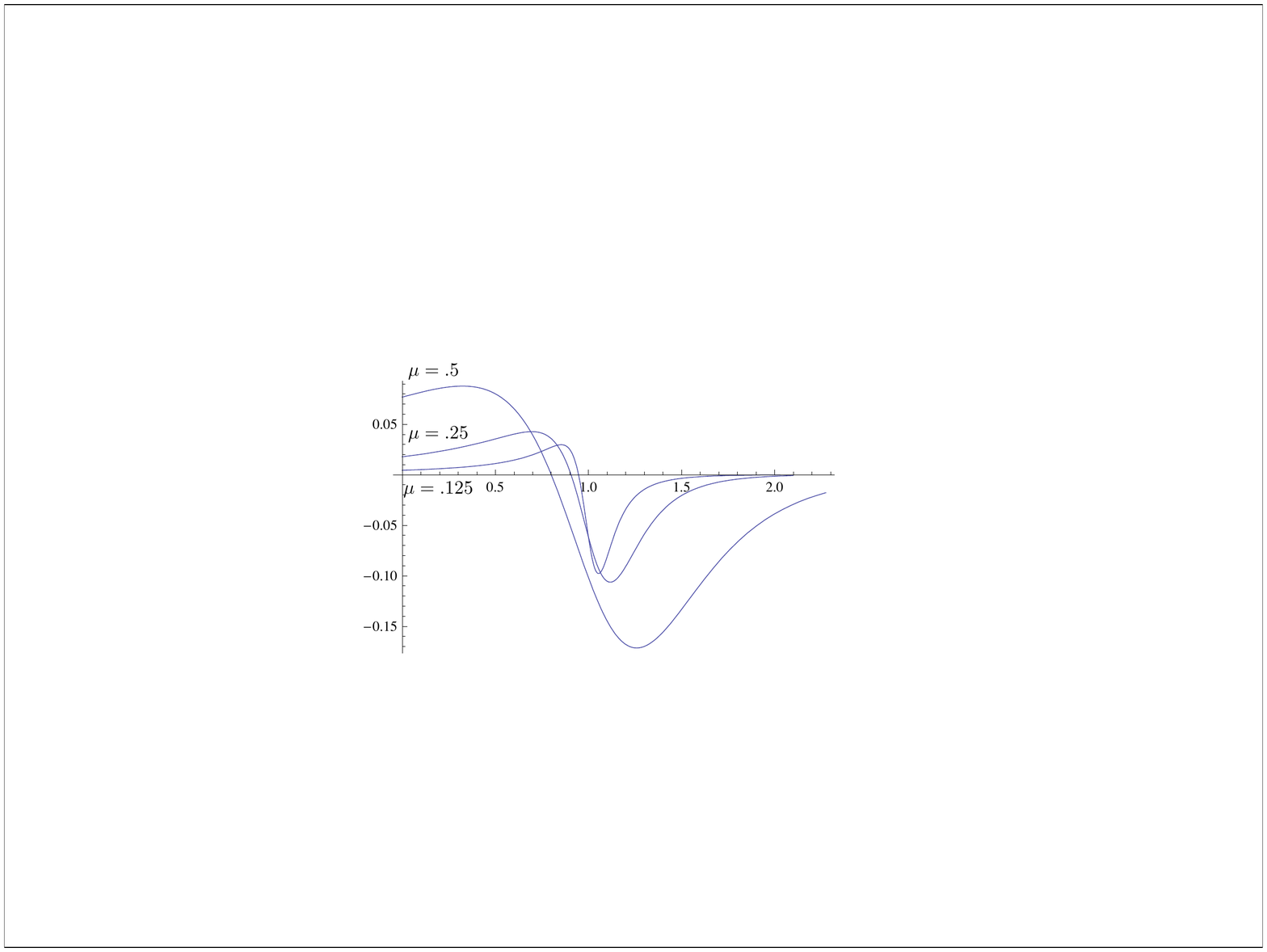}
\caption{\label{spook_accel_diffs}$\ddot x_\mu(t; c) - \ddot x_\mu(t; 0)$ for $\mu =  \frac 1 8$, $\frac 1 4$, and $\frac 1 2$.
left: $c = \frac 1 5$, right: $c = 1$. }
\end{center}
\end{figure}

The quadratic polynomial moderation incentive $\tilC_{\rm q}$ yields a piecewise smooth
solution of the arbitrary
time synthesis problem that is simpler in some regards, but less convenient in others, than the
solutions for the elliptical incentives. As we shall see, the controls for the 
$\tilC_{\rm q}$ solutions satisfying (\ref{opt_gen_pair}) are continuous, but not everywhere 
differentiable if the parameter $c$ in the position-dependent component of the cost function 
is nonzero; the solution behaves like the time-minimization solution, with control identically equal 
to one, for the first part of the manuever, then satisfies a linear fourth order ODE for the 
remainder of the manuever.
  
Equation (\ref{QCC_cp}) implies that the restriction of $\tilchi_{\rm q}^{-1}$ to $[1, \infty)$ is the identity  map; 
hence the initial condition $\chi_{\rm q}(\lamu_0) = C(0) = 1 + \frac c 2$ implies that 
$|\lamu_0| = 1 + \frac c 2$ and $|\ddot x(0)| = 1$. A similar argument shows that the terminal condition 
$\chi_{\rm q}(\lamuf) = C(1) = 1$ implies that $|\lamuf| = |\ddot x(\tfin)| = 1$.
Since $\dot x(0) = \dot x(\tfin)$ implies that
$\ddot x$, and hence $\lamu$ changes sign, $\lamu$ must pass through zero; hence there 
exists $t_* > 0$ such that $\lamu(t) \geq 1$ for $0 \leq t \leq t_*$ and $\lamu(t) < 1$ on some interval 
$(t_*, t_* + \epsilon)$. We find solutions to the arbitrary duration problem satisfying
$1> \lamu > -1$ on $(t_*, \tfin)$, with $\lamu(\tfin) = -1$. (As we shall show later, there are solutions 
to the specified duration synthesis problem that overshoot and oscillate about the target.)

The second order system (\ref{opt_gen_pair}) equals that for $\tilC_0$ when $|\lamu| > 1$:
$x(t)$ satisfies $\ddot x \equiv 1$ on the interval $[0, t_*]$; the initial conditions $x(0) = \dot x(0) = 0$
imply that $x(t) = \frac {t^2} 2 $ on $[0, t_*]$. On $(t_*, \tfin]$,
\[
\ddot x = \chi_{\rm q}'(\lamu) = \lamu,
\qquad \mbox{and hence} \qquad
x^{(4)} = \ddot \lamu =  c (1 - x).
\]
The linear fourth order PDE $y'''' + 4 \, y = 0$ has the general solution
\beq{QCC_accel}
y(s)  =  \lp \begin{array}{c} \cosh s \\ \sinh s \end{array} \rp^T
M \lp \begin{array}{c} \cos s \\ \sin s \end{array} \rp
\eeq
for an arbitrary matrix $M \in \R^{2 \times 2}$; (\ref{QCC_accel}) satisfies the initial conditions
$y(0) = y_0$, $y'(0) = v_0$, and $y''(0) = a_0$ if and only if
\beq{M_conds}
M = 
\lp \begin{array}{cc} y_0 & {\displaystyle \frac {v_0 + m} 2} \medskip \\
{\displaystyle \frac {v_0 - m} 2} & {\displaystyle \frac {a_0} 2} \end{array} \rp
\eeq
for some $m \in \R$.  

\begin{figure}
\begin{center}
\includegraphics[height=1.65in]{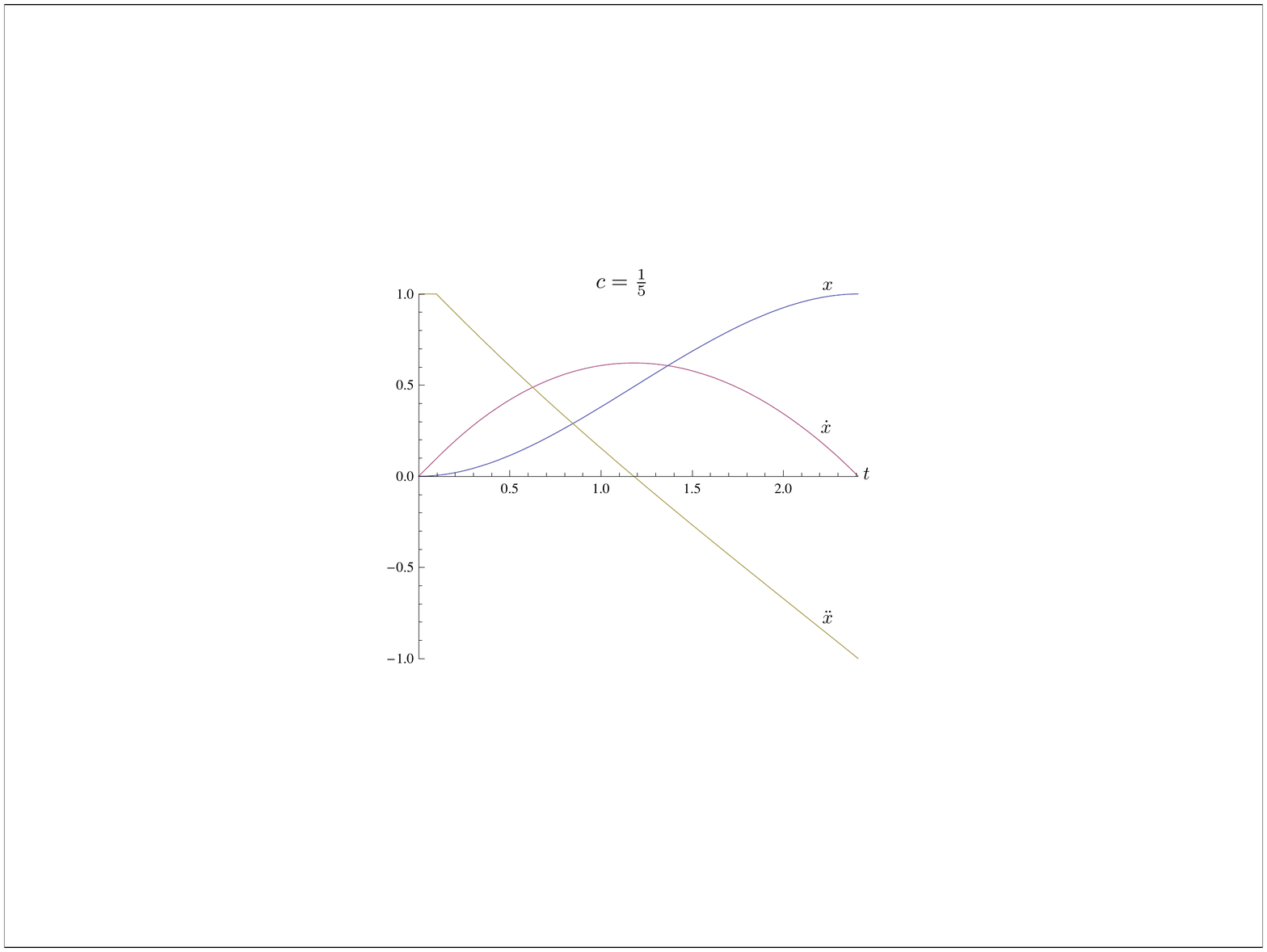} \quad
\includegraphics[height=1.65in]{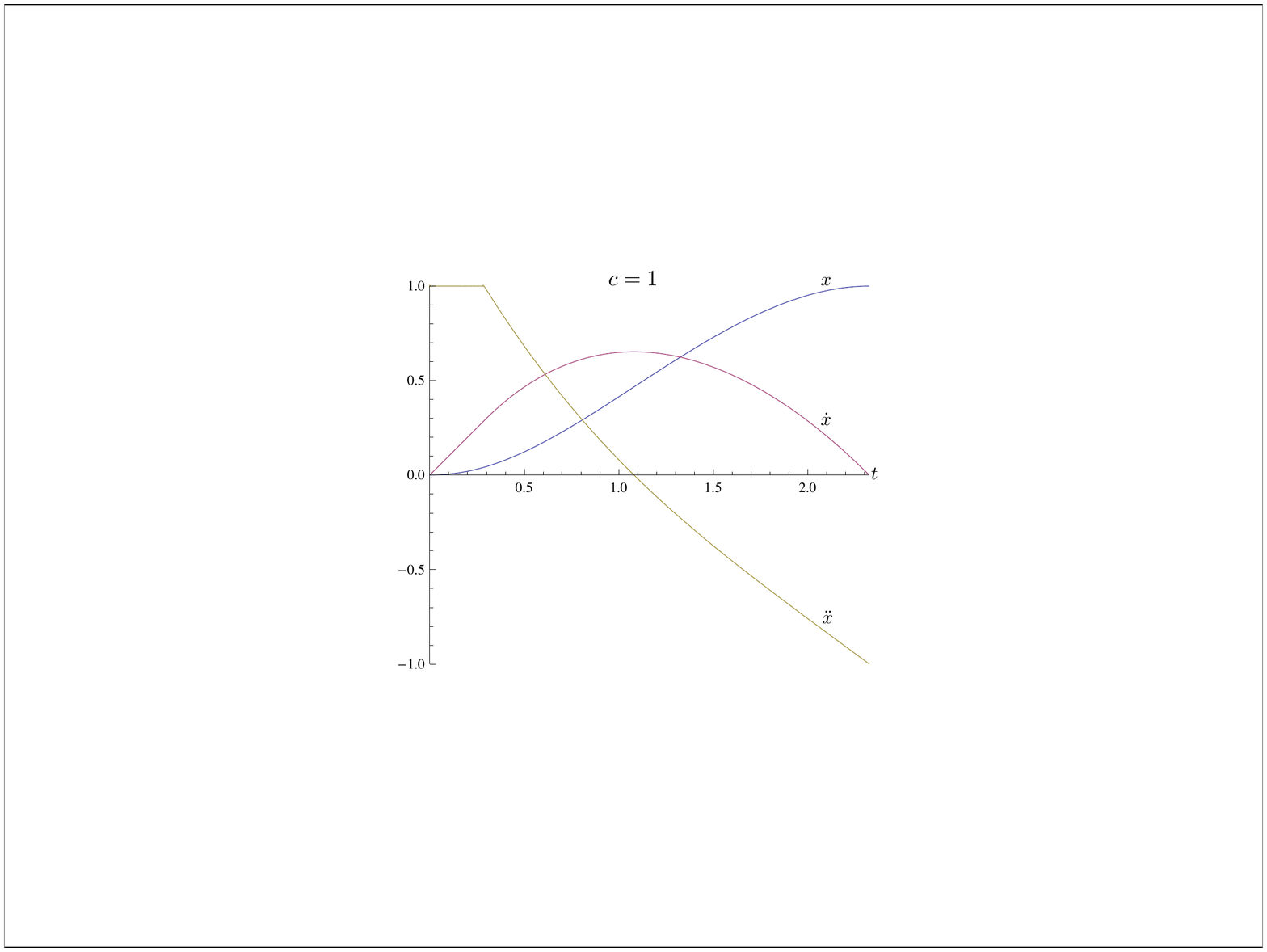} \quad
\includegraphics[height=1.65in]{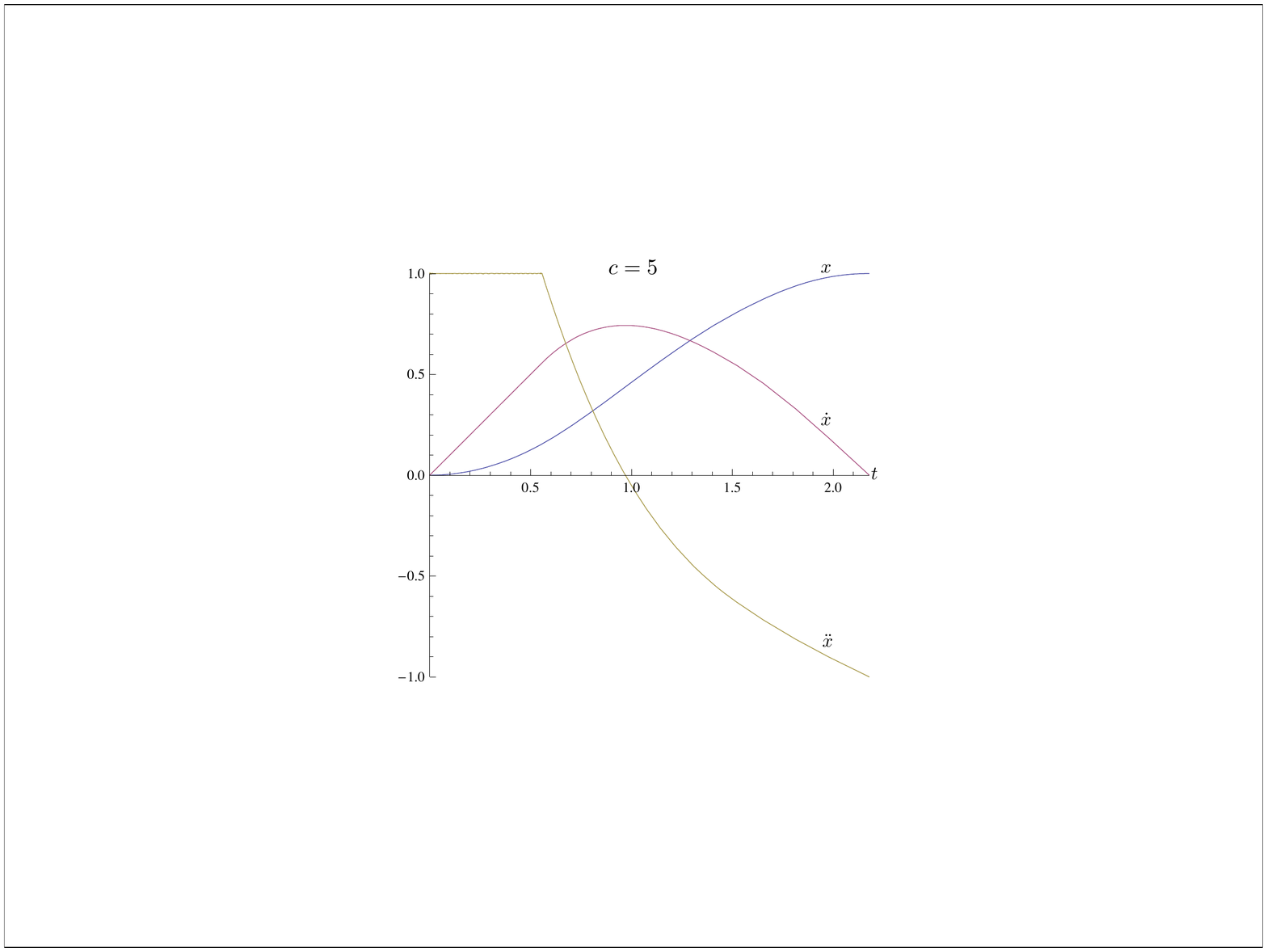} 
\caption{\label{spook_samples_qcc} Position, velocity, and acceleration plots for the solutions of the 
synthesis problem with moderation incentive $\tilC_{\rm q}$, $c = \frac 1 5$, 1, 5.}
\end{center}
\end{figure}

We seek $t_*$, $\tfin$, and $M$ such that 
\beq{hybrid}
x(t) = \lcb \begin{array}{ll} \frac {t^2} 2 & \qquad 0 \leq t \leq t_* \\
1 - y \lp c^{1/4}/\sqrt{2} \, (t - t_*)\rp & \qquad t_* < t \leq \tfin
\end{array} \right .
\eeq
is twice differentiable and satisfies the boundary conditions $x(\tfin) = 1$, $\dot x(\tfin) = 0$,
and $\ddot x(\tfin) = -1$. Hence we require
\[
y_0 = 1 - \frac {t_*^2} 2, \qquad v_0 = - t_*, \sands
a_0 = - \lp c^{-1/4} \sqrt{2} \rp^2 \ddot x(t_*) = - \frac 2 {\sqrt{c}}.
\]
If we set $\sfin := c^{1/4}/{\sqrt 2} \, (\tfin - t_*)$, then the terminal conditions are
\beq{end_conds}
y(\sfin) =  y'(\sfin) = 0 \sands
y''(\sfin) =  - \lp c^{-1/4} \sqrt{2} \rp^2 \ddot x(\tfin) = \frac 2 {\sqrt{c}}.
\eeq
Solving (\ref{end_conds}) for $y_0$, $v_0$, and $m$, given $a_0 = - \frac 2 {\sqrt{c}}$,
yields
\beqa
\sqrt{c} \lp \begin{array}{c} y_0 \\ v_0 \\ m \end{array} \rp 
&=& \lp \begin{array}{c} y_0(\sfin) \\ v_0(\sfin) \\ m(\sfin) \end{array} \rp\\
&:=& \frac 1 {\cosh \sfin  \, \sin \sfin + \cos \sfin \, \sinh \sfin} 
 \lp \begin{array}{c} (\cosh \sfin + \cos \sfin)(\sinh \sfin - \sin \sfin) \\
- (\sinh \sfin - \sin \sfin)^2 \\ (\cosh \sfin + \cos \sfin)^2
\end{array} \rp.
\eeqa
The matching condition 
\[
\frac {v_0(\sfin)^2} {4 \, \sqrt{c}} = \frac {\dot x(t_*)^2} 2 
= \frac {t_*^2} 2 = x(t_*) = 1 - \frac {y_0(\sfin)}{\sqrt{c}},
\]
equivalently
\[
\sqrt{c} = y_0(\sfin) + \frac {v_0(\sfin)^2} 4,
\]
implicitly determines a function 
$\sfin: \R^+ \to (0, \tilde \sfin)$, where $\tan \tilde \sfin + \tanh \tilde \sfin = 0$. The functions
\[
t_*(c) := - \frac {v_0(\sfin(c))}{\sqrt{2} \, c^{1/4}}
\sands
\tfin(c) := \frac {\sqrt{2} \, \sfin(c)}{c^{1/4}} + t_*(c) 
= \frac {\sqrt{2}}{c^{1/4}} \lp \sfin(c) - \frac {v_0(\sfin(c))} 2 \rp
\]
give the desired transition time and duration. The curves $t_*(c)$ and $\tfin(c)$ are shown in 
Figure \ref{qcc_times}; plots of the position, velocity, and acceleration for some representative 
values of $c$ are shown in Figure \ref{spook_samples_qcc}.

\begin{figure}
\begin{center}
\includegraphics[width=2.25in]{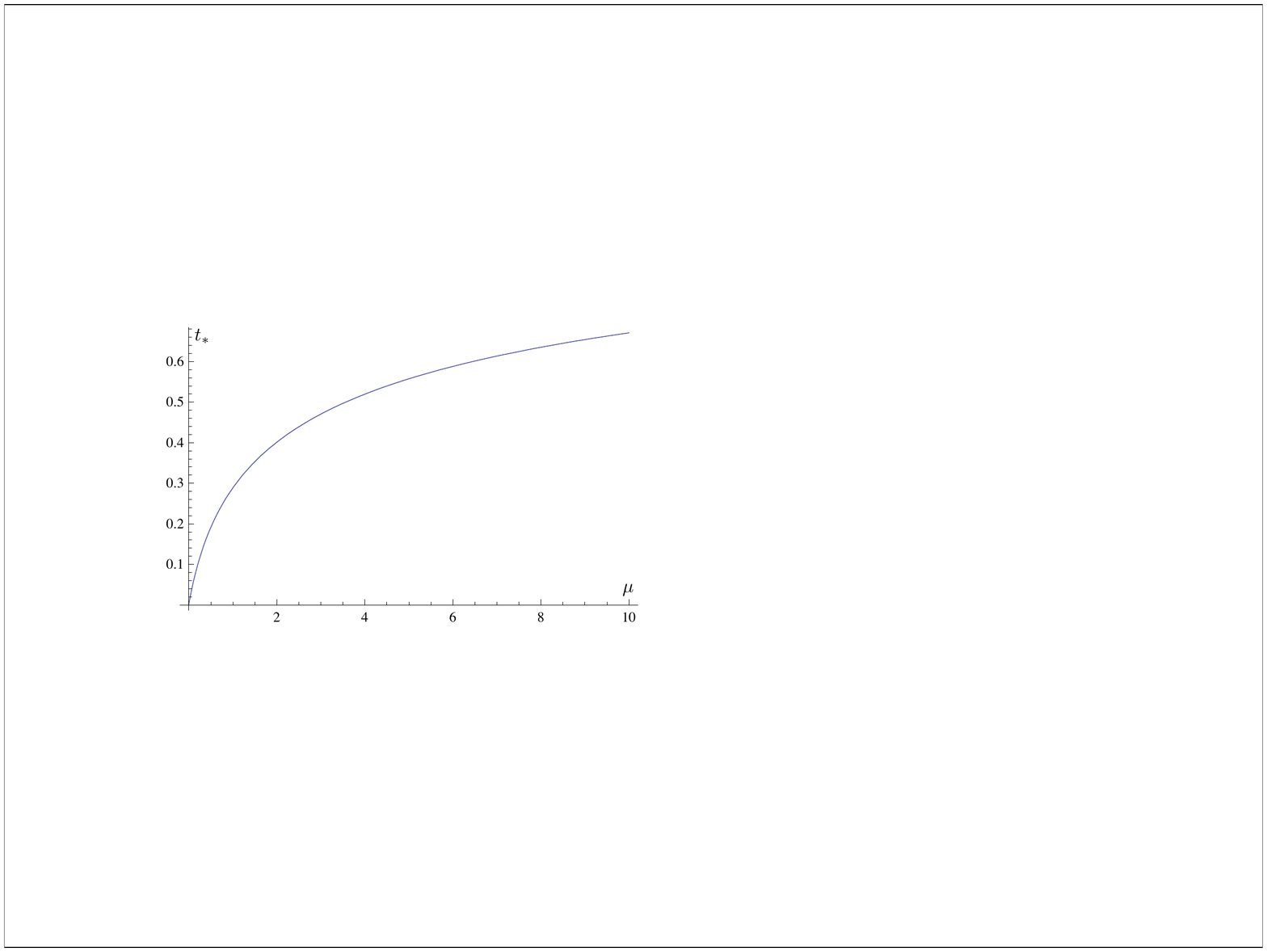} 
\qquad \includegraphics[width=2.25in]{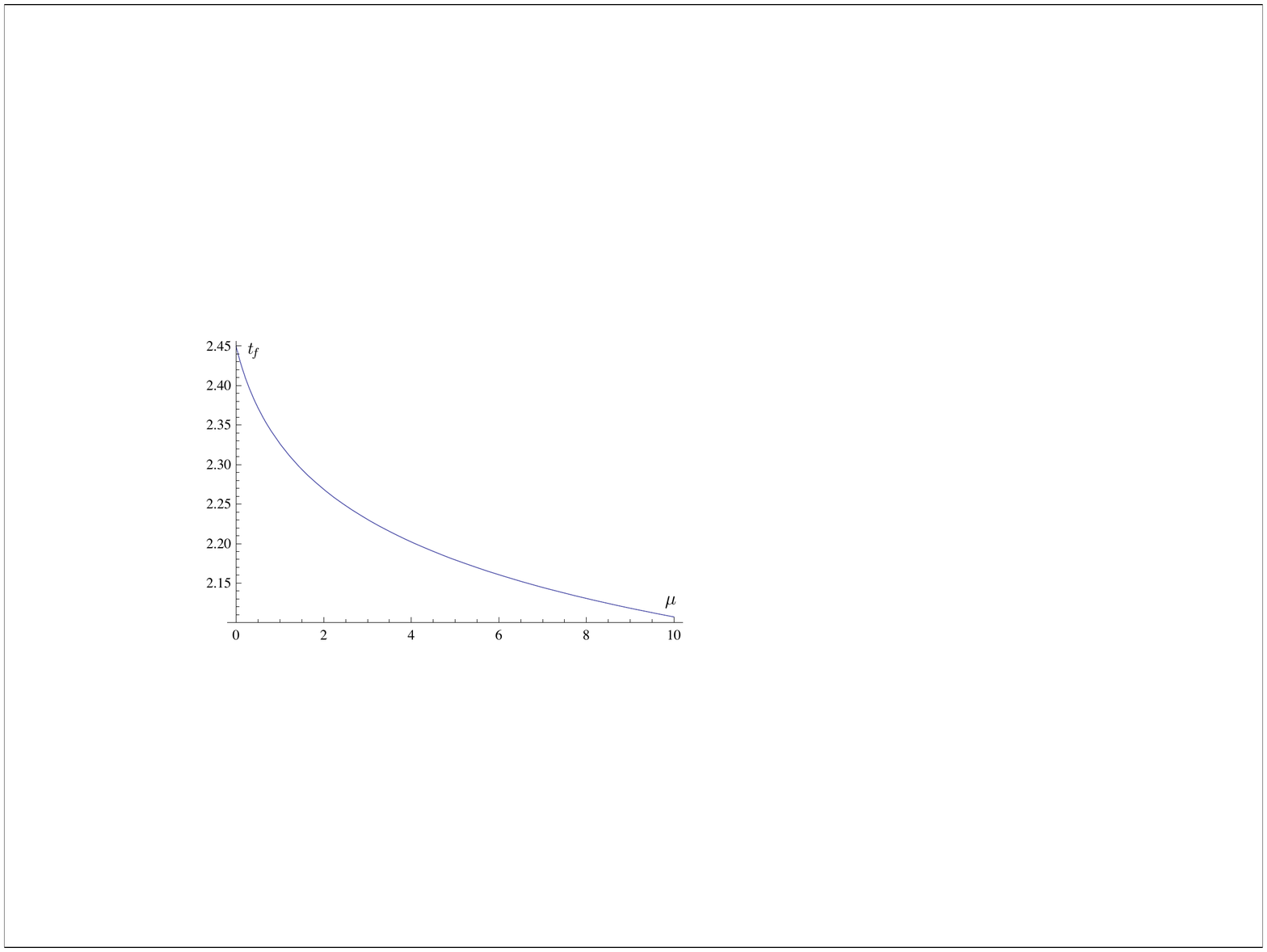}
\caption{\label{qcc_times}Transition time $t_*$ and total duration $\tfin$
for the quadratic moderation incentive $\tilC_{\rm q}$. }
\end{center}
\end{figure}

We now briefly consider the alternative cost function 
$\widehat C_{\rm qcc}(x, u) = \frac {u^2} 2 + \frac c 2 (1 - x)^2$. This cost function, in which the
term $\frac {u^2} 2$ is naturally interpreted as a positive control cost added to the position-dependent
cost, differs from the $\tilC_{\rm q}$ moderated problem analysed above only by the constant $\half$. Thus solutions of the synthesis problem for one cost function are solutions for the other, as
well.  A solution of the  synthesis problem for the moderation incentive
$\tilC_{\rm q}$ is a solution for the arbitrary duration synthesis problem for $\widehat C_{\rm qcc}$ 
if and only if the Hamiltonian determined by  $\widehat C_{\rm qcc}$ is equal to zero along the solution;
equivalently: if and only if $\ddot x(0)^2 = c$ and $\ddot x(\tfin) = 0$. If we restrict our attention to
$c \leq 1$, then this condition on the initial acceleration is compatible  with the general 
control constraint $|\ddot x| \leq 1$.  We seek $M \in \R^{2 \times 2}$ and $\sfin \in \R^+$ such that 
$y(s)$ given by (\ref{QCC_accel}) satisfies the boundary conditions $y(0) = 1$, 
$y'(0) = y(\sfin) = y'(\sfin) = y''(\sfin) = 0$, and $y''(0) = -2$, and hence
\[
x(t) = 1 - y \lp c^{1/4}/\sqrt{2} \, t \rp 
\]
satisfies the boundary conditions of the arbitrary duration synthesis problem for $\widehat C_{\rm qcc}$,
where $\tfin = c^{-1/4} \, \sqrt{2} \, \sfin$, as before. After substituting $y_0 = 1$, $v_0 = 0$, and 
$a_0 = - 2$ into (\ref{M_conds}), we find that the terminal conditions $y(\sfin) = y'(\sfin) = y''(\sfin) = 0$ 
are satisfied if and only if $m = 2 \, \coth \sfin$ and $\sin \sfin = 0$. Thus the solutions of the arbitrary
duration synthesis problem for $\widehat C_{\rm qcc}$ have duration $\tfin(c, k) := c^{-1/4} \, \sqrt{2}\, \pi \, k$, 
$k \in {\mathbb N}$.  If $k > 1 $, the solution of duration  $\tfin(c, k)$ overshoots the destination,
making $\frac {k - 1} 2$ oscillations about the target before stopping.

\begin{figure}
\begin{center}
\includegraphics[width=2.75in]{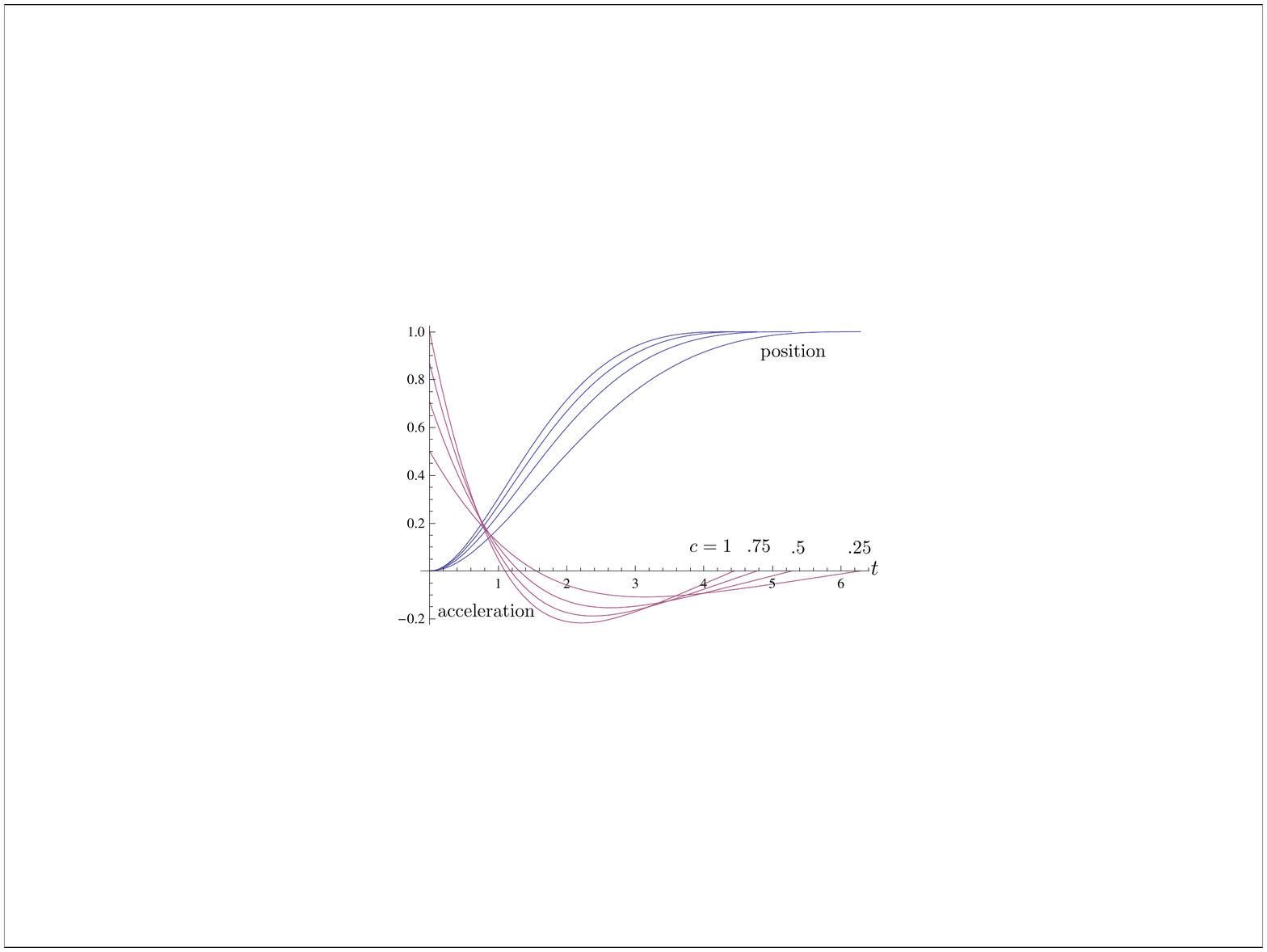} 
\caption{\label{qcc_sols}Solutions of the arbitrary duration synthesis problem for $\widehat C_{\rm qcc}$,
for $c = .25$, $.5$, $.75$, $1$ and $k = 1$. }
\end{center}
\end{figure}

\begin{figure}[h]
\begin{center}
\includegraphics[width=2.25in]{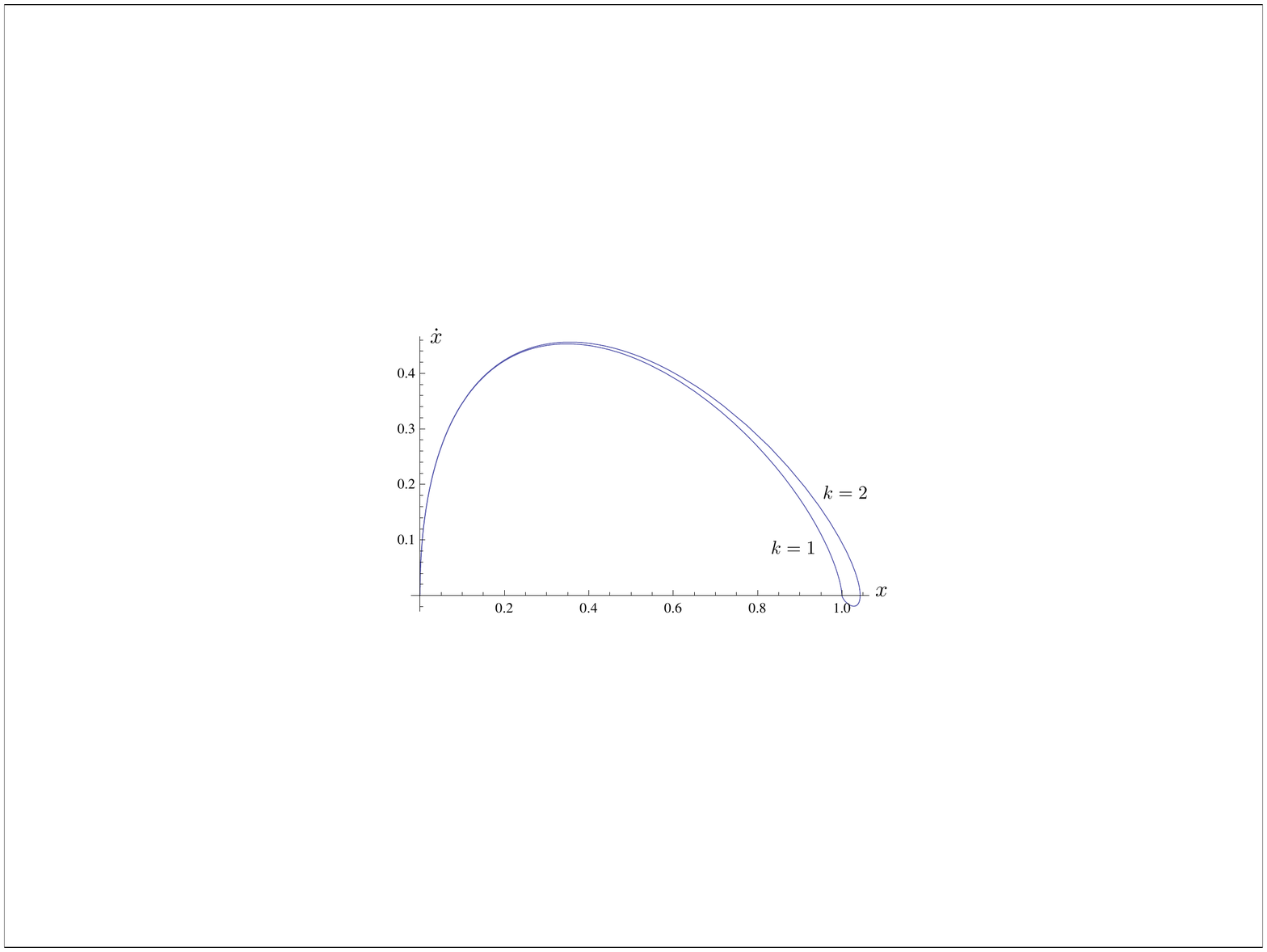} 
\qquad \includegraphics[width=2.25in]{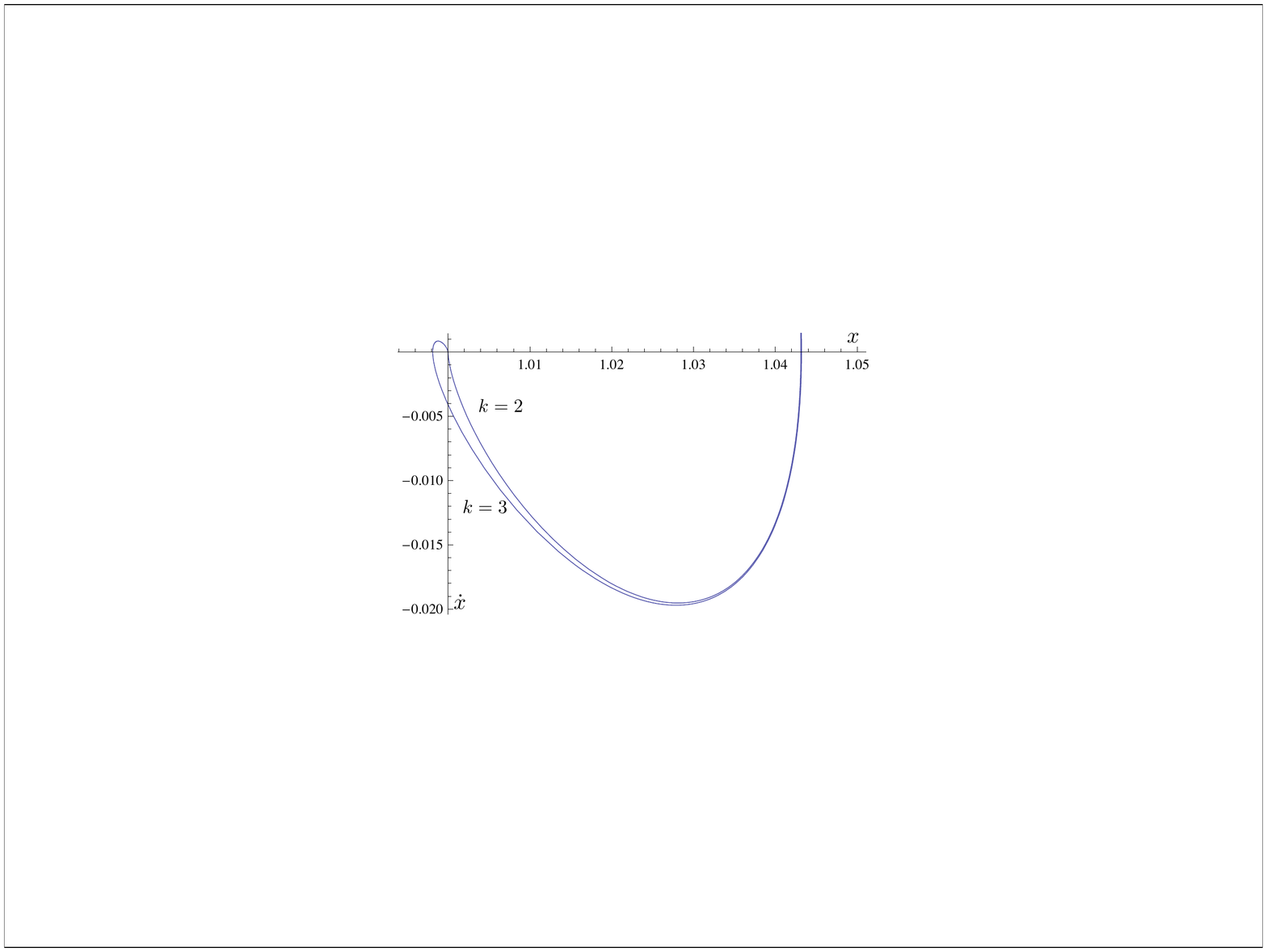}
\caption{\label{qcc_overshoot}Parametric plots $(x, \dot x)$ of QCC solutions for $c = 1$, 
$\tfin = k \, \sqrt{2} \, \pi$. Left: $k = 1$ and $2$; right: close-up of $k = 2$ and $3$}
\end{center}
\end{figure}
We now  show that none of these solutions of the arbitrary duration problem for $\widehat C_{\rm qcc}$
actually minimize the total cost;
\[
\widehat C_{\rm tot}(\tfin) :=  \int_0^{\tfin} \widehat C_{\rm qcc}(x, \ddot x) dt 
= \frac {c \, m} 2,
\qquad \mbox{with derivative} \qquad
\widetilde C_{\rm tot}'(\tfin) 
=  - 2 \, c \, \ddot x(\tfin)^2.
\]
Thus the cost is nonincreasing and the solutions of duration $\tfin(k)$ are all inflection points,
satisfying $\widehat C_{\rm tot}(\tfin(c, k)) =  2 \, c \, \coth k \, \pi$.
Hence increases in the total maneuver time yield exponentially small reductions in cost. 

\begin{figure}
\begin{center}
\includegraphics[width=2.5in]{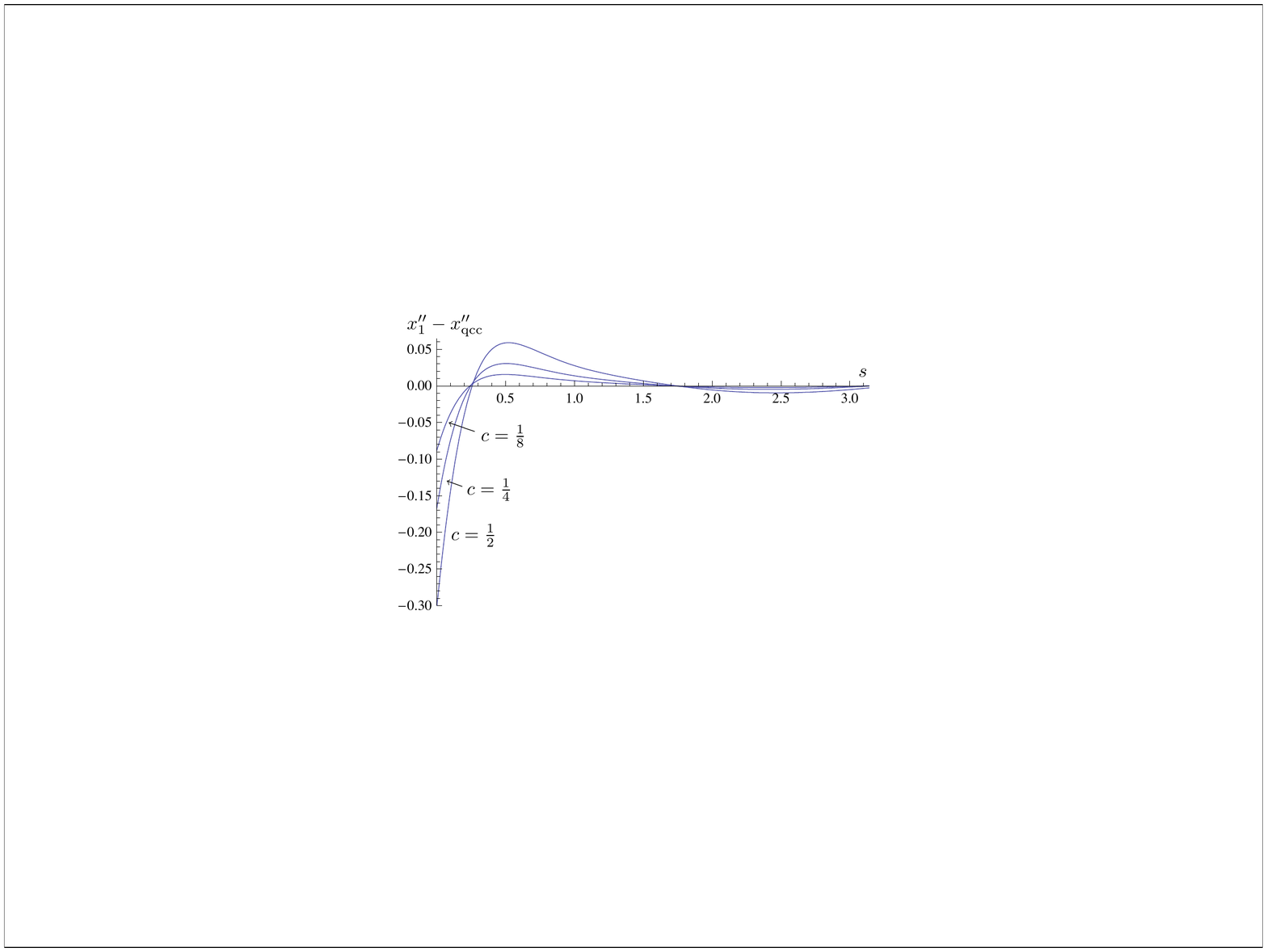} 
\caption{\label{spook_qcc_diffs}
Differences in acceleration for rescaled time trajectories:
$x_1''(s; c) - x_{\rm qcc}''(s; c)$ for $c = \frac 1 8$, $\frac 1 4$, and $\frac 1 2$.}
\end{center}
\end{figure}
For relatively small values of $c$, Figures \ref{spook_samples} and \ref{qcc_sols} show a qualitative 
resemblance between the moderated solutions for $\tilC_1$ and the solutions for $\widehat C_{\rm qcc}$.
To facilitate the comparison of these solutions, we reparametrize the  $\tilC_1$ solutions 
using  the rescaled time 
$s := \smallfrac {c^{1/4}} {{\sqrt 2}} \, t$. The rescaling of the fourth evolution equation 
(\ref{fourth_spook}) takes the form
\[
\mu \, \smallfrac {d^2 \ }{d s^2} \frac {x''} {\sqrt{1 - \smallfrac c 4 (x'')^2}} = 4 (1 - x).
\]
Hence the moderated solution approaches that of the QCC problem as $\mu \to 1$ and 
$c \to 0$, but the two families are not equal for nonzero $c$. See Figure \ref{spook_qcc_diffs} 
for a comparison of the controls for the two families for some representative values of $c$.

\section{Conclusions and future work}

Our choices of state space, vector fields, admissible control regions, and unmoderated cost
functions were intended to be the simplest possible. We intend to 
generalize each of these components of the synthesis problem. 

Optimal control on nonlinear manifolds has received significant attention in recent years,
particularly situations in which the controls can be modeled as elements of a distribution
within the tangent bundle of the state manifold, corresponding to (partially) controlled 
velocities. See, e.g. \cite{NVdS, Sontaga, Sontagb, Bloch}, and references therein. Analogous
constructions for (partially) controlled higher order
derivatives (e.g. controlled acceleration) can be implemented using jet bundles, but can
be unwieldy in practical implementations. We are particularly interested in Lie groups, 
since these manifolds possess additional structure that facilitates the identification of the 
controls with elements of a single vector space, the Lie algebra. Results for conservative
systems on Lie groups, homogeneous manifolds, and associated bundles should be easily 
extended to optimal control problems.  Geometric integration schemes for the numerical
integration of Hamiltonian systems can be used to approximate the
solutions of synthesis problems on such manifolds; see \cite{LS, Leimkuhler, 
synode, LO1, LO} and references therein.

The conservation law (\ref{cons_law}) can play a crucial role in the exact or approximate 
solution of the synthesis problem. In \cite{Lewis_cons} we develop several results, including
a reduction of the evolution equation for the auxiliary variables to the sphere, that exploit
the conservation law. We show that a simple vertical take-off interception model with controlled
velocities can be reduced to quadratures using this approach. We intend to generalize these results 
and apply them to more complex systems in future work. 

The assumption that the admissible control set is the unit ball can be 
relaxed to  $\lcb u \in V : f(u) \leq  c \rcb $ for some vector space $V$,
differentiable function $f: V \to [f_{\rm min}, \infty)$  with $\nabla f$ everywhere nonzero on 
$\partial \cU$, and constant $c \in \R$. The analog of $\tilC_{\rm q}$ would be 
$\tilC(u) = c - f(u)$; the analogs of $\tilC_\mu$ would be $\mu \, \sqrt{c - f(u)}$. Clearly the
property that the control $u$ would be a rescaling of the auxiliary variable $\lamu$ would not
hold; hence the determination of the value of $u$ maximizing the Hamiltonian would, in 
general, be  more complicated than in the case considered here.
More generally, the incentive could be a function of both control
and state variables, retaining the property that the function rewards avoidance
of the boundary of the admissible control set. 

We intend to investigate the skewed gradient equations (\ref{opt_gen_pair}) in greater detail,
seeking both analytic properties and efficient numerical schemes. We hope to model
various biomechanical systems using optimal control formulations of the type described here,
investigating the utility of moderation incentives in the interpretation of animal motion and
behavior.  The construction of families of optimal solutions parametrized by moderation or 
urgency may shed light on aspects of motion planning that are not easily understood using a
single cost function. For example, most of us utilize a range of strategies when lifting and carrying
objects: a newborn, a laptop, and a phone book merit different levels of caution. 
Significant expenditure of energy is required to capture prey or evade a predator, but 
exhaustion and lameness leave both predator and prey vulnerable to future attacks and
starvation---long-term survival depends on the adjustment of resource consumption to the demands 
of each encounter. Even very simple mathematical models can add to our understanding 
of the adaptability of natural control systems.

\eject
\bibliography{control}
\bibliographystyle{siam}
\end{document}